\documentclass[twoside,11pt]{article}

\usepackage[margin=1in]{geometry}
\usepackage{mathptmx} 
\usepackage{authblk} 
\usepackage{setspace} 
\usepackage{parskip} 
\usepackage{fancyhdr} 
\usepackage{lastpage} 
\usepackage{amsmath,amssymb,amsthm}
\usepackage{bm} 
\usepackage{mathtools} 
\usepackage{physics}

\theoremstyle{plain}
\newtheorem{theorem}{Theorem}[section]
\newtheorem{lemma}[theorem]{Lemma}
\newtheorem{proposition}[theorem]{Proposition}

\theoremstyle{definition}
\newtheorem{definition}[theorem]{Definition}

\usepackage[
  backend=biber,       
  sorting=nty,          
  maxnames=3,           
  minnames=1,           
  maxbibnames=5,        
  minbibnames=3,       
  doi=true,             
  isbn=false,           
  url=false,            
  eprint=false          
]{biblatex}
\DeclareFieldFormat{journaltitle}{\textit{#1}}  
\DeclareFieldFormat{booktitle}{\textit{#1}}    
\DeclareFieldFormat{title}{#1.} 

\usepackage{lineno}

\newcommand{\keywords}[1]{\noindent\textbf{Keywords:} #1}

\usepackage[
  colorlinks=true,
  linkcolor=blue,
  citecolor=blue,
  urlcolor=blue,
  pdfauthor={Yiyuan Wang},
  pdftitle={An Iterative Computational Framework for Infinite-Horizon Mean-Field Linear-Quadratic Zero-Sum Stochastic Differential Games},
  pdfsubject={Computational Approach; Mean-Field Linear-Quadratic Stochastic Differential Games in an Infinite Horizon},
  pdfkeywords={Computational Approach; Mean-Field Linear-Quadratic Stochastic Differential Games in an Infinite Horizon}
]{hyperref}
\usepackage{doi} 
\usepackage{url}

\title{An Iterative Computational Framework for Infinite-Horizon Mean-Field Linear-Quadratic Zero-Sum Stochastic Differential Games}
\author{Yiyuan Wang \thanks{Yiyuan Wang is at the Shandong University Zhongtai Securities Institute for Financial Studies, Shandong University, 27 Shanda Nanlu, Jinan, P.R. China, 250100 (Email: wangyiyuan@mail.sdu.edu.cn).}}

\date{}

\begin{document}
\maketitle

\begin{abstract}
This work develops an iterative computational framework to obtain saddle-point solutions for infinite-horizon two-person mean-field linear-quadratic zero-sum stochastic differential games. By generalizing classical iterative framework, we construct a monotonically increasing matrix sequence to decouple the strongly coupled, analytically intractable original problem into a set of tractable subproblems. By sequentially computing the stabilizing solutions of the coupled algebraic Riccati equations for these subproblems, we can further derive the solution to the original problem. Rigorous convergence analysis is established to validate the proposed iterative algorithm. Different from existing algorithms limited to specific simplified setups, this framework proposes the first universal computational paradigm applicable to a broad class of game-driven Riccati equations.\\
\vspace{0.5em}
\keywords{Computational Approach, Linear-Quadratic, Stabilizing Solution, Mean-Field Games} \\
\end{abstract}

\section{Introduction} 
\label{sec:introduction}
 
Mean-field stochastic differential equations (MF-SDEs), also known as McKean-Vlasov SDEs, originate from the early work of \cite{Kac1956} and were rigorously established by \cite{McKean1966}. Over decades, extensive research has been conducted on the optimal control and differential games of MF-SDEs. Recent studies have further explored linear quadratic control problems, indefinite stochastic control and mean-field differential games, with rich theoretical results concerning solvability, Riccati equations and feedback representations achieved by numerous scholars; see, for example, \cite{Buckdahn-Li-Peng-Rainer2017, Sun-Yong2020a, Sun-Yong2020b, Li-Li-Yu2020, Tian-Yu-Zhang2020, Sun-Wang-Wu2021} and their reference.

This paper focuses on finding the saddle point of two-person mean-field linear-quadratic zero-sum stochastic differential games in an infinite time horizon (MF-LQZSSDGs). \cite{Li2021} proves that both the existence of open-loop and closed-loop saddle points are characterized by the solvability of a system of two coupled generalized algebraic Riccati equations with static stabilizing solutions. Notably, algebraic Riccati equations with indefinite quadratic terms are conventionally termed game-theoretic algebraic Riccati equations (GTAREs) in the literature. Early studies concerning this class of Riccati equations primarily focused on developing iterative algorithms for their special cases, as documented in \cite{Feng2010,Dragan2011,Dragan2008,Dragan201501,Dragan201502,Ivanov2015,Dragan2017,Ivanov2018,Aberkane2023,Sun2023}. More recently, \cite{wang2025} proposed a unified iterative framework to compute closed-loop saddle points for infinite-horizon two-person linear-quadratic zero-sum stochastic differential games. Building upon this iterative framework, \cite{wang2026} further developed a reinforcement learning-based solution algorithm. Nevertheless, the existing literature still lacks a comprehensive and systematic solution framework for the general coupled game-theoretic algebraic Riccati equations considered in this work, which constitutes the core research gap addressed by our study.

We develops a novel solution method for MF-LQZSSDGs by decoupling their intrinsic coupled structures. Specifically, we construct a monotonically increasing matrix sequence by extending the unified iterative framework established in \cite{wang2025}. This decomposition transforms the original intractable MF-LQZSSDGs into a sequence of tractable interconnected subproblems. By sequentially solving these subproblems via deriving the stabilizing solutions of the corresponding coupled algebraic Riccati equations (AREs), we eventually obtain the stabilizing solutions for the original MF-LQZSSDGs. The main contributions of this paper are summarized as follows:

\begin{itemize}
    \item General computational framework: Existing studies are only limited to special scenarios of the investigated problem. In contrast, this work overcomes the core challenges brought by the inherent coupling effect and proposes the first general computational approach to seek saddle points for MF-LQZSSDGs.
    \item Extended applicability: The proposed computational method breaks through the limitations of classical frameworks and is applicable to a broad class of game-theoretic algebraic Riccati equations, including deterministic GTAREs, stochastic $H_\infty$-type GTAREs, stochastic GTAREs derived from zero-sum linear-quadratic stochastic differential games, and the more general coupled AREs associated with MF-LQZSSDGs.
    \item Generalized solvability conditions: This paper establishes a set of generalized conditions, which provide explicit and effective criteria for characterizing the solvability of coupled AREs arising from MF-LQZSSDGs.
\end{itemize}

The remainder of this paper is organized as follows. Section \ref{sec:preliminary} covers the mathematical framework of MF-LQZSSDGs. Section \ref{sec:main_results} presents the main results, including the key structural properties of the problem, the criteria for the existence and uniqueness of the stabilizing solutions to the subproblems, and the proofs for the boundedness and convergence of the iterative sequence. Section \ref{sec:example} details a numerical example.

\section{Preliminary} 
\label{sec:preliminary}

\subsection{Notation}
Let us first introduce the following notation:
\begin{itemize}
    \item $\mathbb{R}^n$: $n$-dimensional real Euclidean space; $\mathbb{C}^{-}$: the set of complex numbers with negative real part; $\mathbb{R}^{n\times m}$: the space of $n\times m$ real matrices; $\mathbb{S}^n$: the set of all $n\times n$ symmetric matrices; $\overline{\mathbb{S}}_{+}^n$: the set of all $n\times n$ symmetric positive semi-definite matrices; $\mathbb{S}_{+}^n$: the set of all $n\times n$ symmetric positive definite matrices. 
    \item $I_n$: the identity matrix of size $n$; $\mathbb{O}_{n \times m}$: the null matrix of size $n \times m$. It can be simplified as 0 when no ambiguity is generated; $A_{m \times n}^{\times (l)}$ denotes the object represented by $A_{m \times n}$ arranged repeatedly for $l$ times.
    \item $A^{\top}$: the transpose of the matrix $A$; $\langle\cdot,\cdot\rangle$: the inner product on a Hilbert space. If $A\in\mathbb{S}_{+}^n$ (resp., $A\in\overline{\mathbb{S}}_{+}^n$), we write $A\succ0$ (resp., $A\succeq0$). For any $A,B\in\mathbb{S}^n$, we use the notation $A\succ B$ (resp., $A\succeq B$) to indicate that $A - B\succ0$ (resp., $A - B\succeq0$).
    \item Let $\mathbb{H}$ be a Euclidean space, and we define the following space \footnote{$\varphi \in \mathbb{F}$ denotes that $\varphi$ is $\mathbb{F}$-progressively measurable.}: 
    $L_{\mathbb{F}}^2(\mathbb{H}) = \{\varphi: [0,\infty) \times \Omega \rightarrow \mathbb{H} \mid \varphi \in \mathbb{F},\mathbb{E} \int_{0}^{\infty} |\varphi(t)|^2 \, dt < \infty \} $; $\mathcal{U}_i = L_{\mathbb{F}}^2(\mathbb{R}^{m_i})(i = 1,2)$, $\mathcal{X}_{\mathrm{loc}}[0,\infty) = \{\varphi: [0,\infty) \times \Omega \rightarrow \mathbb{R}^n \mid \varphi \in \mathbb{F} \,\text{ is continuous},\, \mathbb{E}\left[\sup_{0 \leq t \leq T} |\varphi(t)|^2\right] < \infty, \forall\, T > 0 \}$; $\mathcal{X}[0,\infty) = \{\varphi \in \mathcal{X}_{\mathrm{loc}}[0,\infty) \mid \mathbb{E} \int_{0}^{\infty} |\varphi(t)|^2 \, dt < \infty \}$.
\end{itemize}

\subsection{Mean-Field Linear-Quadratic Stochastic Differential Games in an Infinite Horizon}

Let $(\Omega,\mathcal{F},\mathbb{F},\mathbb{P})$ be a complete filtered probability space on which a r-dimensional standard Brownian motion $W = \{W(t)^{\top}=(w_{1}(t),\dotsb,w_{r}(t));t\geq0\}$ is defined with $\mathbb{F}=\{\mathcal{F}_t\}_{t\geq0}$ being the usual augmentation of the natural filtration generated generated by $W$. We consider the following controlled linear mean-field stochastic differential equation (MF-SDE, for short) on $[0,\infty)$:
\begin{equation}
\label{mflqsdg:sde}
\begin{cases}
\begin{aligned}
dX(t) &= \left\{ AX(t) + \bar{A}\mathbb{E}[X(t)] + B_1u_1(t) + \bar{B}_1\mathbb{E}[u_1(t)] + B_2u_2(t) + \bar{B}_2\mathbb{E}[u_2(t)] \right\} dt \\
& + \sum_{l=1}^{r}\left\{ C_{l}X(t) + \bar{C}_{l}\mathbb{E}[X(t)] + D_{l,1}u_1(t) + \bar{D}_{l,1}\mathbb{E}[u_1(t)] + D_{l,2}u_2(t) + \bar{D}_{l,2}\mathbb{E}[u_2(t)] \right\} dw_l(t), \quad t\ge 0, \\
X(0) &= x.
\end{aligned}
\end{cases}
\end{equation}

In the above, $X(\cdot)$ is the state process taking values in $\mathbb{R}^n$ with $x$ being the initial state. For $i=1,2$, $u_i(\cdot)$ is the control process of Player $i$, taking values in $\mathbb{R}^{m_i}$, respectively. The coefficients $A, \bar{A}, C_{l}, \bar{C}_{l} \in \mathbb{R}^{n\times n}$, $B_1, \bar{B}_1, D_{l,1}, \bar{D}_{l,1} \in \mathbb{R}^{n\times m_1}$, $B_2, \bar{B}_2, D_{l,2}, \bar{D}_{l,2} \in \mathbb{R}^{n\times m_2}(1 \leq l \leq r)$ are given constant matrices. In the subsequent discussion, we use $\hat{\Pi}$ to denote $\Pi+\bar{\Pi}$.

In this paper, Player 1 and Player 2 share the same performance functional:
\begin{equation}
\label{mflqsdg:performance_functional}
J(x;u_1(\cdot),u_2(\cdot)) = \mathbb{E}\int_0^\infty g(t,X(t),u_1(t),u_2(t),\mathbb{E}[X(t)],\mathbb{E}[u_1(t)],\mathbb{E}[u_2(t)]) dt,
\end{equation}
with
\begin{align*}
g(t,x,u_1,u_2,\bar{x},\bar{u}_1,\bar{u}_2) &= \left\langle
\begin{pmatrix}
Q & S_{1}^\top & S_{2}^\top \\
S_{1} & R_{11} & R_{12} \\
S_{2} & R_{21} & R_{22}
\end{pmatrix}
\begin{pmatrix}
x \\ u_1 \\ u_2
\end{pmatrix},
\begin{pmatrix}
x \\ u_1 \\ u_2
\end{pmatrix}
\right\rangle   + \left\langle
\begin{pmatrix}
\bar{Q} & \bar{S}_{1}^\top & \bar{S}_{2}^\top \\
\bar{S}_{1} & \bar{R}_{11} & \bar{R}_{12} \\
\bar{S}_{2} & \bar{R}_{21} & \bar{R}_{22}
\end{pmatrix}
\begin{pmatrix}
\bar{x} \\ \bar{u}_1 \\ \bar{u}_2
\end{pmatrix},
\begin{pmatrix}
\bar{x} \\ \bar{u}_1 \\ \bar{u}_2
\end{pmatrix}
\right\rangle,
\end{align*}
where
\[
\begin{cases}
Q, \bar{Q} \in \mathbb{S}^n, S_{1}, \bar{S}_{1} \in \mathbb{R}^{m_1\times n},S_{2}, \bar{S}_{2} \in \mathbb{R}^{m_2\times n}, \\
R_{11}, \bar{R}_{11} \in \mathbb{S}^{m_1},R_{22}, \bar{R}_{22} \in \mathbb{S}^{m_2}, R_{12} = R_{21}^\top, \bar{R}_{12} = \bar{R}_{21}^\top \in \mathbb{R}^{m_1\times m_2}.
\end{cases}
\]

For $(x,u_1,u_2)\in\mathbb{R}^n\times\mathcal{U}_1\times\mathcal{U}_2$, the solution $X(\cdot;x,u_1,u_2)$ to the MF-SDE in \eqref{mflqsdg:sde} may only exist in $\mathcal{X}_{loc}[0,\infty)$, which renders $J(x;u_1,u_2)$ ill-defined. We define the set of admissible controls as $\mathcal{U}_{ad}(x)=\{ (u_1,u_2)\in\mathcal{U}_1 \times \mathcal{U}_2\mid X(\cdot;x,u_1,u_2)\in\mathcal{X}[0,\infty)\}$. A pair $(u_1,u_2)\in\mathcal{U}_{ad}(x)$ is called an admissible control pair for the initial state $x$, and the corresponding $X(\cdot;x,u_1,u_2)$ is referred to as the admissible state process. In this case, $J(x, u_1, u_2)$ is clearly well-defined.

In this zero-sum mean-field game, Player 1 (\textit{the maximizer}) selects control $u_1$ to maximize \eqref{mflqsdg:performance_functional}, while Player 2 (\textit{the minimizer}) chooses $u_2$ to minimize the same function. The problem is to find an admissible control pair $(u_1^*,u_2^*)$ that both players can accept. We denote the above-mentioned problem as $({\text{MF-SDG}})^{\,0}_{\infty}$ for short. For a description of the $({\text{MF-SDG}})^{\,0}_{\infty}$ problem, refer to \cite{Li2021}. More detailed information can be found therein. 

The $({\text{MF-SDG}})^{\,0}_{\infty}$ problem corresponds to following system of coupled AREs:
\begin{equation}
\label{mflqsdg:gtare}
    \begin{cases}
        Q(P)-S(P)^{\top}R(P)^{-1}S(P)=0\\
        \hat{Q}(\hat{P},P)-\hat{S}(\hat{P},P)^{\top}\hat{R}(P)^{-1}\hat{S}(\hat{P},P)=0\\
         R_{11}(P) \prec 0,\hat{R}_{11}(P) \prec 0,R_{22}(P) \succ 0,\hat{R}_{22}(P) \succ 0 \\
    \end{cases}    
\end{equation}
where
\begin{equation}
\label{mflqsdg:gtare_auxiliarymatrix}
\begin{aligned}
    &Q(P)=PA + A^{\top}P+\sum_{l=1}^{r}C_{l}^{\top}PC_{l} + Q,\quad \hat{Q}(\hat{P},P)=\hat{P}\hat{A} + \hat{A}^{\top}\hat{P}+\sum_{l=1}^{r}\hat{C}_{l}^{\top}P\hat{C}_{l} + \hat{Q},\\
    &S(P)=\begin{bmatrix}
    S_1(P) \\
    S_2(P)
    \end{bmatrix}=\begin{bmatrix}
    B_1^{\top}P+\sum_{l=1}^{r}D_{l,1}^{\top}PC_{l}+S_1 \\
    B_2^{\top}P+\sum_{l=1}^{r}D_{l,2}^{\top}PC_{l}+S_2\\
    \end{bmatrix},\\
    &\hat{S}(\hat{P},P)=\begin{bmatrix}
    \hat{S}_1(\hat{P},P) \\
    \hat{S}_2(\hat{P},P)
    \end{bmatrix}=\begin{bmatrix}
    \hat{B}_1^{\top}\hat{P}+\sum_{l=1}^{r}\hat{D}_{l,1}^{\top}P\hat{C}_{l}+\hat{S}_1 \\
    \hat{B}_2^{\top}\hat{P}+\sum_{l=1}^{r}\hat{D}_{l,2}^{\top}P\hat{C}_{l}+\hat{S}_2\\
    \end{bmatrix},\\
    &R(P)=\begin{bmatrix}
    R_{11}(P) & R_{12}(P)\\
    R_{21}(P) & R_{22}(P)\\
    \end{bmatrix},\quad R_{ij}(P)=R_{ij}+\sum_{l=1}^{r}D_{l,i}^{\top}PD_{l,j}\,(i,j = 1,2),\\
    &\hat{R}(P)=\begin{bmatrix}
    \hat{R}_{11}(P) & \hat{R}_{12}(P)\\
    \hat{R}_{21}(P) & \hat{R}_{22}(P)\\
    \end{bmatrix},\quad \hat{R}_{ij}(P)=\hat{R}_{ij}+\sum_{l=1}^{r}\hat{D}_{l,i}^{\top}P\hat{D}_{l,j}\,(i,j = 1,2).
\end{aligned}
\end{equation}

To facilitate the subsequent description, we first define MF-SDE \eqref{mflqsdg:sde} as system 
\[
[A, \bar{A}, \{C_{l}\}_{1 \leq l \leq r}, \{\bar{C}_{l}\}_{1 \leq l \leq r}; B_{1},\bar{B}_{1},B_{2},\bar{B}_{2}, \{D_{l,1}\}_{1 \leq l \leq r},\{\bar{D}_{l,1}\}_{1 \leq l \leq r}, \{D_{l,2}\}_{1 \leq l \leq r},\{\bar{D}_{l,2}\}_{1 \leq l \leq r}
].
\]
For simplicity, we denote the system
\begin{equation}
\label{mflqsdg:homogeneous_sde}
\begin{cases}
dX(t) = \left\{ AX(t) + \bar{A}\mathbb{E}[X(t)] \right\} dt + \sum_{l=1}^{r}\left\{ C_{l}X(t) + \bar{C}_{l}\mathbb{E}[X(t)]  \right\} dw_l(t), \quad t\ge 0, \\
X(0) = x.
\end{cases}
\end{equation}
as 
\[
[A, \bar{A}, \{C_{l}\}_{1 \leq l \leq r}, \{\bar{C}_{l}\}_{1 \leq l \leq r}] = [A, \bar{A}, \{C_{l}\}_{1 \leq l \leq r}, \{\bar{C}_{l}\}_{1 \leq l \leq r}; 0, 0,0,0, \{0\}_{1 \leq l \leq r}, \{0\}_{1 \leq l \leq r},\{0\}_{1 \leq l \leq r}, \{0\}_{1 \leq l \leq r}],
\]
the system
$
[A, \{C_{l}\}_{1 \leq l \leq r}]= [A, 0,  \{C_{l}\}_{1 \leq l \leq r}, \{0\}_{1 \leq l \leq r}]
$
(the linear SDE without mean-field), and the system
$
[A ]= [A, 0, \{0\}_{1 \leq l \leq r}, \{0\}_{1 \leq l \leq r}] 
$
(linear ordinary differential equation, ODE for short).

\begin{definition}[\cite{Li2021}]
\label{mflqsdg:l2}
\begin{enumerate}
    \item[(i.)] System $[A, \bar{A}, \{C_{l}\}_{ 1 \leq l \leq r}, \{\bar{C}_{l}\}_{1 \leq l \leq r}]$ is said to be $L^2$-globally integrable, if for any $x\in\mathbb{R}^n$, the solution $X(\cdot)\equiv X(\cdot;x)$ of MF-SDE \eqref{mflqsdg:homogeneous_sde} is in $\mathcal{X}[0,\infty)$.
    \item[(ii.)] System $[A, \bar{A}, \{C_{l}\}_{ 1 \leq l \leq r}, \{\bar{C}_{l}\}_{1 \leq l \leq r}]$  is said to be $L^2$-asymptotically stable, if for any $x\in\mathbb{R}^n$, the solution $X(\cdot)\equiv X(\cdot;x)\in\mathcal{X}_{loc}[0,\infty)$ of MF-SDE \eqref{mflqsdg:homogeneous_sde} satisfies
    \[
    \lim_{t\to\infty} \mathbb{E}|X(t)|^2 = 0.
    \]
\end{enumerate}
\end{definition}

\begin{definition}[\cite{Li2021}]
\label{mflqsdg:l2+}
\begin{enumerate}
    \item[(i.)] For any $\Theta \equiv (\Theta_1,\Theta_2, \bar{\Theta}_1,\bar{\Theta}_2) \in \mathbb{R}^{m \times 4n}$,
    \begin{equation}
    \label{mflqsdg:l2_feedback}
    \begin{aligned}
    u_1(\cdot) \equiv \Theta_1 \{ X(\cdot) - \mathbb{E}[X(\cdot)] \} + \bar{\Theta}_1 \mathbb{E}[X(\cdot)], \\
    u_2(\cdot) \equiv \Theta_2 \{ X(\cdot) - \mathbb{E}[X(\cdot)] \} + \bar{\Theta}_2 \mathbb{E}[X(\cdot)],
     \end{aligned}
    \end{equation}
    is called a \textit{feedback control}. Under such a control, the state equation \eqref{mflqsdg:sde} becomes
    \begin{equation}
    \begin{cases}
    \begin{aligned}
    dX(t) &= \left\{ A^\Theta X(t) + \bar{A}^\Theta \mathbb{E}[X(t)] \right\} dt +  \sum_{l=1}^{r}\left\{ C_{l}^\Theta X(t) + \bar{C}_{l}^\Theta \mathbb{E}[X(t)]  \right\} dw_{l}(t), \quad t \ge 0, \\
    X(0) &= x,
    \end{aligned}
    \end{cases}
    \end{equation}
    where
    \begin{align*}
    A^\Theta &= A + B_1\Theta_1+B_2\Theta_2, & \bar{A}^\Theta &= \bar{A} + \bar{B}_1\bar{\Theta}_1 + B_1(\bar{\Theta}_1 - \Theta_1)+ \bar{B}_2\bar{\Theta}_2 + B_2(\bar{\Theta}_2 - \Theta_2), \\
    C_{l}^\Theta &= C_{l} + D_{l,1}\Theta_1+D_{l,2}\Theta_2, & \bar{C}_{l}^\Theta &= \bar{C}_{l} + \bar{D}_{l,1}\bar{\Theta}_1 + D_{l,1}(\bar{\Theta}_1 - \Theta_1)+\bar{D}_{l,2}\bar{\Theta}_2 + D_{l,2}(\bar{\Theta}_2 - \Theta_2).
    \end{align*}
    \item[(ii.)] System \[[A, \bar{A}, \{C_{l}\}_{1 \leq l \leq r}, \{\bar{C}_{l}\}_{1 \leq l \leq r}; B_{1},\bar{B}_{1},B_{2},\bar{B}_{2}, \{D_{l,1}\}_{1 \leq l \leq r},\{\bar{D}_{l,1}\}_{1 \leq l \leq r}, \{D_{l,2}\}_{1 \leq l \leq r},\{\bar{D}_{l,2}\}_{1 \leq l \leq r}
    ]\] is said to be \textit{MF-$L^2$-stabilizable}, if there exists a $\Theta \equiv(\Theta_1,\Theta_2, \bar{\Theta}_1,\bar{\Theta}_2) \in \mathbb{R}^{m \times 4n}$ such that system $[A^\Theta, \bar{A}^\Theta, C^\Theta, \bar{C}^\Theta]$ is $L^2$-asymptotically stable and system $[A^\Theta, C^\Theta]$ is $L^2$-globally integrable. In this case, $\Theta \equiv(\Theta_1,\Theta_2, \bar{\Theta}_1,\bar{\Theta}_2)$ is called an \textit{MF-$L^2$-stabilizer} of the system
    \[
    [A, \bar{A}, \{C_{l}\}_{1 \leq l \leq r}, \{\bar{C}_{l}\}_{1 \leq l \leq r}; B_{1},\bar{B}_{1},B_{2},\bar{B}_{2}, \{D_{l,1}\}_{1 \leq l \leq r},\{\bar{D}_{l,1}\}_{1 \leq l \leq r}, \{D_{l,2}\}_{1 \leq l \leq r},\{\bar{D}_{l,2}\}_{1 \leq l \leq r}
    ].
    \]
\end{enumerate}
\end{definition}

\subsection{Some useful results}

To facilitate subsequent analysis of the problem, we introduce the following decoupling representation method. Setting 
\[
\begin{bmatrix}  v_1(t) \\ v_2(t) \end{bmatrix}=\begin{bmatrix}  u_1(t) \\ u_2(t)  \end{bmatrix}-\begin{bmatrix} K_1(0) X(t)+(\hat{K}_1(0,0) -K_1(0))\mathbb{E}[X(t)])\\ K_2(0) X(t)+(\hat{K}_2(0,0) -K_2(0))\mathbb{E}[X(t)])\end{bmatrix},
\]
and $v_2(t) = LX(t)+(\hat{L}-L)\mathbb{E}[X(t)]$ in $({\text{MF-SDG}})^{\,0}_{\infty}$,
where
\[
\begin{bmatrix} K_1(0) \\ K_2(0) \end{bmatrix} = -R(0)^{-1} \begin{bmatrix} S_1(0) \\ S_2(0) \end{bmatrix},\begin{bmatrix} \hat{K}_1(0,0) \\ \hat{K}_2(0,0) \end{bmatrix} = -\hat{R}(0)^{-1} \begin{bmatrix} \hat{S}_1(0,0) \\ \hat{S}_2(0,0) \end{bmatrix}.
\]
Then, we obtain
\begin{equation*} 
    \begin{cases} 
        dX(t) = \left\{ A_{L}X(t) + \bar{A}_{L} \mathbb{E}[X(t)]+ B_1v_1(t) + \bar{B}_1\mathbb{E}[v_1(t)] \right\} dt \\
        \quad \quad+ \sum_{l=1}^{r}\left\{ C_{lL}X(t)+\bar{C}_{lL} \mathbb{E}[X(t)]+ D_{l,1}v_1(t) +\bar{D}_{l,1}\mathbb{E}[v_1(t)] \right\}  dw_{l}(t) \\
        X(0) = x
    \end{cases}
\end{equation*}
in which $x \in\mathbb{R}^n$, and 
\begin{equation*}
    J_{L}(x;v_1)=\mathbb{E}\int_{0}^{\infty}g(t,X(t),v_1(t),\mathbb{E}[X(t)],\mathbb{E}[v_1(t)])dt,
\end{equation*}
with
$g_{L}(t,x,v_1,\bar{x},\bar{v}_1)=\Bigg\langle\begin{pmatrix}
    Q_{L} & S^{\top}_{L} \\
    S_{L} & R_{11}
    \end{pmatrix}\begin{pmatrix}
    x\\
    v_1
    \end{pmatrix},\begin{pmatrix}
    x\\
    v_1
    \end{pmatrix}\Bigg\rangle+\Bigg\langle\begin{pmatrix}
    \bar{Q}_{L} & \bar{S}^{\top}_{L} \\
    \bar{S}_{L} & \bar{R}_{11}
    \end{pmatrix}\begin{pmatrix}
    \bar{x}\\
    \bar{v}_1
    \end{pmatrix},\begin{pmatrix}
    \bar{x}\\
    \bar{v}_1
    \end{pmatrix}\Bigg\rangle$
, where
\[
\begin{cases}
A_{L} = A + B_1K_1(0) + B_2K_2(0)+ B_2L\\
C_{lL} = C_{l} + D_{l,1}K_1(0) + D_{l,2}K_2(0)+ D_{l,2}L,  1 \leq l \leq r \\
Q_{L} = Q - S^{\top}(0) R(0)^{-1}S(0) + L^{\top}R_{22}L,\quad S_{L}= R_{12}L \\
\bar{A}_{L} = \bar{A} + \bar{B}_1\hat{K}_1(0,0) + \bar{B}_2\hat{K}_2(0,0)+B_1(\hat{K}_1(0,0)-K_1(0))+B_2(\hat{K}_2(0,0)-K_2(0))+ \bar{B}_2\hat{L}+B_2(\hat{L}-L)\\
\bar{C}_{lL} = \bar{C}_{l} + \bar{D}_{l,1}\bar{K}_1(0,0) + \bar{D}_{l,2}\bar{K}_2(0,0)+D_{l,1}(\hat{K}_1(0,0)-K_1(0))+D_{l,2}(\hat{K}_2(0,0)-K_2(0))+ \\
\quad \quad \quad\bar{D}_{l,2}\hat{L}+D_{l,2}(\hat{L}-L),  1 \leq l \leq r \\
\bar{Q}_{L} = \bar{Q} - \hat{S}^{\top}(0,0) \hat{R}(0)^{-1}\hat{S}(0,0) +S^{\top}(0) R(0)^{-1}S(0)+ \hat{L}^{\top}\hat{R}_{22}\hat{L}-L^{\top}R_{22}L,\quad \bar{S}_{L}= \hat{R}_{12}\hat{L} - R_{12}L 
\end{cases}.
\]
The corresponding coupled ARE is
\begin{equation}
\label{mflqsdg:coupledARE_l}
\begin{cases}
\begin{aligned}
&PA_{L} + A_{L}^{\top}P+ \sum_{l=1}^r C_{lL}^{\top}PC_{lL}+ Q_{L} - (B_{1}^{\top}P + \sum_{l=1}^r D_{l,1}^{\top}PC_{lL} + S_{L}) ^{\top} \\
& \quad \times(R_{11} + \sum_{l=1}^r D_{l,1}^{\top}PD_{l,1})^{-1} ( B_1^{\top}P + \sum_{l=1}^r D_{l,1}^{\top}PC_{lL} + S_{L} ) = 0,\\
&\hat{P}\hat{A}_{L} + \hat{A}_{L}^{\top}\hat{P}+  \sum_{l=1}^r \hat{C}_{lL}^{\top}P\hat{C}_{lL}+\hat{Q}_{L} - \big(\hat{B}_{1}^{\top}\hat{P} +\sum_{l=1}^r \hat{D}_{l,1}^{\top}P\hat{C}_{lL}+ \hat{S}_{L}\big) ^{\top} \\
& \quad \times(\hat{R}_{11}+\sum_{l=1}^r \hat{D}_{l,1}^{\top}P\hat{D}_{l,1})^{-1} \big( \hat{B}_{1}^{\top}\hat{P} +\sum_{l=1}^r \hat{D}_{l,1}^{\top}P\hat{C}_{lL}+\hat{S}_{L} \big) = 0.
\end{aligned} 
\end{cases}
\end{equation}

Throughout this work, $\mathcal{A}$ stands for the set of $(L,\hat{L})$, where $(L,\hat{L}) \in \mathbb{R}^{m_2 \times 2n} $ satisfy: 
\begin{itemize}
    \item The system $[A_{L}, \bar{A}_{L}, \{C_{lL}\}_{ 1 \leq l \leq r}, \{\bar{C}_{lL}\}_{1 \leq l \leq r};B_1,\bar{B}_1,\{D_{l,1}\}_{1 \leq l \leq r},\{\bar{D}_{l,1}\}_{1 \leq l \leq r}] $ is \textit{MF-$L^2$-stabilizable}.
    \item The coupled ARE \eqref{mflqsdg:coupledARE_l}  has a static stabilizing solution $(\tilde{P}_{L},\hat{P}_{L})$, satisfying the sign conditions
    \[
    R_{11} + \sum_{l=1}^r D_{l,1}^{\top}\tilde{P}_{L}D_{l,1} \prec 0,\quad \hat{R}_{11} + \sum_{l=1}^r \hat{D}_{l,1}^{\top}\tilde{P}_{L}\hat{D}_{l,1} \prec 0.
    \]
\end{itemize}

\begin{definition}[\cite{Dragan2013book}]
Given the following stochastic observation system:
\begin{equation*}
    \begin{cases}
         dX(t) = A X(t)dt + \sum_{l=1}^{r} C_l X(t)dw_l(t) \\
         dY(t) = E_0 X(t)dt + \sum_{l=1}^{r} E_l X(t)dw_l(t)
    \end{cases},
\end{equation*}
where $E_l \in R^{ q\times n}(0 \leq l \leq r)$, we denote this system by $\left[ {E_0},\{E_l\}_{ 1 \leq l \leq r}; A, \{C_l\}_{ 1 \leq l \leq r} \right]$. It is said to be \textit{stochastically detectable} if there exists a constant matrix $\varTheta \in \mathbb{R}^{n \times q}$ such that the system
$[A + \varTheta E_0,\ \{C_{l} + \varTheta E_{l}\}_{1 \leq l \leq r}]$
is $L^2$-asymptotically stable.
\end{definition}

\begin{lemma}[\cite{wang2025}]
\label{main_results:stochastically_detectable_lemma}
If the system $\left[ {E_0},\{E_l\}_{ 1 \leq l \leq r}; A, \{C_l\}_{ 1 \leq l \leq r} \right]$ is \textit{stochastically detectable}, then the following statements are equivalent:
\begin{itemize}
    \item[(a).] Let $\mathcal{L}^*$ denote the linear operator associated with this system, defined by
    $\mathcal{L}^*(P) = P A + A^{\top} P + \sum_{l=1}^{r} C_l^{\top} P C_l; \, \forall P \in \mathbb{S}^n$.
    Then all eigenvalues of $\mathcal{L}^*$ lie in the left half-plane, i.e., $\operatorname{Spec}\mathcal{L}^* \subset \mathbb{C}^{-}$.
    \item[(b).] There exists a matrix $P \in \overline{\mathbb{S}}_{+}^n$ satisfying the matrix equation $P A + A^{\top} P + \sum_{l=1}^{r} C_l^{\top} P C_l + \sum_{l=0}^{r} E_l^{\top} E_l = 0$.
\end{itemize}
\end{lemma}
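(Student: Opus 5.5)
We prove the two implications separately, working throughout with the Lyapunov operator $\mathcal{L}^*$ on $\mathbb{S}^n$ and its adjoint $\mathcal{L}(X)=AX+XA^{\top}+\sum_{l=1}^{r}C_lXC_l^{\top}$. The second-moment matrix $X(t)=\mathbb{E}[x(t)x(t)^{\top}]$ of $[A,\{C_l\}]$ satisfies $\dot X=\mathcal{L}(X)$. We will use two standard facts. First, $\operatorname{Spec}\mathcal{L}^*=\operatorname{Spec}\mathcal{L}\subset\mathbb{C}^-$ is equivalent to $L^2$-asymptotic stability of $[A,\{C_l\}]$. Second, $e^{t\mathcal{L}}$ and $e^{t\mathcal{L}^*}$ are positive operators, i.e.\ they preserve $\overline{\mathbb{S}}_{+}^n$.

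\emph{(a)$\Rightarrow$(b).} Assume $\operatorname{Spec}\mathcal{L}^*\subset\mathbb{C}^-$. Then $\mathcal{L}^*$ is invertible, and
\[
P=\int_0^\infty e^{t\mathcal{L}^*}\Big(\sum_{l=0}^{r}E_l^{\top}E_l\Big)\,dt
\]
converges. Since the integrand is positive semidefinite for every $t$, $P\succeq 0$. Moreover $\mathcal{L}^*(P)=-\sum_{l=0}^rE_l^{\top}E_l$, which is exactly the equation in (b). Detectability is not needed for this direction.

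\emph{(b)$\Rightarrow$(a).} Let $P\succeq0$ solve the equation, and let $\varTheta$ be the detectability gain. Put $\tilde A=A+\varTheta E_0$, $\tilde C_l=C_l+\varTheta E_l$, and let $\tilde{\mathcal{L}}$ denote the corresponding operator, which is Hurwitz by assumption.

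Take any initial condition $x$ and let $x(t)$ solve $[A,\{C_l\}]$. Apply It\^o's formula to $\langle Px,x\rangle$ and integrate. This gives
\[
\mathbb{E}\langle Px(T),x(T)\rangle+\sum_{l=0}^{r}\mathbb{E}\int_0^T|E_lx(t)|^2dt=\langle Px,x\rangle,
\]
so $\mathbb{E}\int_0^\infty |E_lx|^2dt<\infty$ for every $l=0,\dots,r$.

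Next, rewrite the original dynamics in output-injection form:
\[
dx=\big(\tilde A x-\varTheta E_0x\big)dt+\sum_{l=1}^{r}\big(\tilde C_l x-\varTheta E_l x\big)dw_l .
\]
This is the stable system $[\tilde A,\{\tilde C_l\}]$ driven by the inputs $-\varTheta E_lx$, each of which lies in $L^2_{\mathbb{F}}$. The standard input-to-state estimate for an $L^2$-asymptotically stable linear SDE then gives $\mathbb{E}\int_0^\infty|x(t)|^2dt\le c\big(|x|^2+\sum_l\mathbb{E}\int_0^\infty|E_lx|^2dt\big)<\infty$. This estimate comes from the variation-of-constants formula, or equivalently from a Lyapunov function $\langle Yx,x\rangle$ with $\tilde{\mathcal{L}}^*(Y)=-I$ plus Young's inequality.

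It remains to pass from integrability to spectrum. Because $x$ was arbitrary, $\int_0^\infty e^{t\mathcal{L}}(X_0)\,dt$ is finite for every $X_0\succeq0$. Positive semidefinite matrices span $\mathbb{S}^n$, so $\int_0^\infty\|e^{t\mathcal{L}}\|\,dt<\infty$. A Datko-type argument, or directly the Krein--Rutman fact that a positive operator has a real dominant eigenvalue $\lambda$ with eigenvector $X_0\succeq0$ (which forces $e^{\lambda t}$ to be integrable, hence $\lambda<0$), then yields $\operatorname{Spec}\mathcal{L}\subset\mathbb{C}^-$. Since $\mathcal{L}^*$ and $\mathcal{L}$ have the same spectrum, (a) follows.

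The main obstacle is the perturbation estimate in (b)$\Rightarrow$(a), especially for the diffusion-term inputs $\varTheta E_lx$. I would handle it with the Lyapunov function $\langle Yx,x\rangle$, where $\tilde{\mathcal{L}}^*(Y)=-I$. Its It\^o differential is bounded by $-|x|^2+\varepsilon|x|^2+c_\varepsilon\sum_l|E_lx|^2$. Choosing $\varepsilon<1$, integrating, and using the $L^2$ bounds on $E_lx$ already obtained closes the estimate.
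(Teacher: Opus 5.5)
Your proof is correct. Note that the paper gives no proof of this lemma: it is quoted from \cite{wang2025}, so there is no in-paper argument to compare step by step.

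\emph{Checking your steps.} Your (a)$\Rightarrow$(b) is the standard integral representation $P=\int_0^\infty e^{t\mathcal{L}^*}\bigl(\sum_l E_l^{\top}E_l\bigr)\,dt$. Your (b)$\Rightarrow$(a) is trajectory-based and all its steps check out:
\begin{itemize}
\item the Lyapunov identity gives $E_lx\in L^2_{\mathbb{F}}$;
\item output injection together with $\langle Yx,x\rangle$, where $\tilde{\mathcal{L}}^*(Y)=-I$, gives $x\in L^2_{\mathbb{F}}$;
\item integrability of $e^{t\mathcal{L}}$ on the cone, hence on all of $\mathbb{S}^n$, forces $\operatorname{Spec}\mathcal{L}\subset\mathbb{C}^{-}$.
\end{itemize}
Two simplifications are available. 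The last step needs neither Datko nor Krein--Rutman: in finite dimension, $\int_0^\infty\|e^{t\mathcal{L}}\|\,dt<\infty$ already rules out any eigenvalue with nonnegative real part (test it on a complex eigenvector). The energy estimate can also be run on $X(t)=\mathbb{E}[x(t)x(t)^{\top}]$ using the matrix inequality $\mathcal{L}^*(Y)\preceq-(1-\varepsilon)I+c_\varepsilon\sum_{l}E_l^{\top}E_l$, which avoids martingale technicalities.

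\emph{A shorter algebraic route.} The eigenvector tool you invoke only at the end suffices by itself. Let $\lambda$ be the spectral abscissa of $\mathcal{L}$, with eigenvector $0\neq X\succeq 0$. Pairing the equation in (b) with $X$ gives
\[
\lambda\,\mathrm{tr}(PX)=\mathrm{tr}\bigl(P\mathcal{L}(X)\bigr)=\mathrm{tr}\bigl(\mathcal{L}^*(P)X\bigr)=-\sum_{l=0}^{r}\mathrm{tr}\bigl(E_lXE_l^{\top}\bigr).
\]
If $\lambda\ge 0$, the left side is nonnegative and the right side nonpositive, so both vanish. Hence $E_lX=0$ for all $l$, which gives $\tilde{\mathcal{L}}(X)=\mathcal{L}(X)=\lambda X$. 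This contradicts the stability of the injected system $[A+\varTheta E_0,\{C_l+\varTheta E_l\}]$.

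\emph{What each route buys.} The algebraic route needs no It\^o calculus and no input-to-state estimate, but it depends on the Krein--Rutman property of the positive operator $\mathcal{L}$. Your route uses only positivity of $e^{t\mathcal{L}}$ and finite-dimensional linear algebra. It also yields explicit $L^2$ bounds on trajectories, and it is the kind of argument that carries over to time-varying coefficients, where a spectral eigenvector argument is not available.
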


\section{The Main Results}
\label{sec:main_results}

\subsection{\texorpdfstring{Analysis of Structural Characteristics for coupled AREs in $({\text{MF-SDG}})^{\,0}_{\infty}$}{Analysis of Structural Characteristics for MF-LQZSSDG}}
\label{subsec:function_gf}

From coupled AREs \eqref{mflqsdg:gtare}, we can define mapping $\mathcal{G}:\mathrm{Dom}\,\mathcal{G} \to \mathbb{S}^n$ and $\mathcal{F}:\mathrm{Dom}\,\mathcal{F} \to \mathbb{S}^n$,
\begin{equation}
\label{main_results:function_gf}
\begin{aligned}
    \mathcal{G}(P) &= P A + A^{\top} P + \sum_{l=1}^{r}C_{l}^{\top} P C_{l} + Q - S(P)^{\top} R(P)^{-1} S(P),\\
    \mathcal{F}(\hat{P},P) &= \hat{P}\hat{A} + \hat{A}^{\top}\hat{P}+\sum_{l=1}^{r}\hat{C}_{l}^{\top}P\hat{C}_{l} + \hat{Q}-\hat{S}(\hat{P},P)^{\top}\hat{R}(P)^{-1}\hat{S}(\hat{P},P). 
\end{aligned}
\end{equation}
The nonlinear function $\mathcal{G}$ and $\mathcal{F}$ are well-defined on the subsets,
\begin{align*}
    \mathrm{Dom}\,\mathcal{G} &=\left\{P\in\mathbb{S}^n| R_{11}(P) \prec 0, R_{22}(P) \succ 0\right\},\\
    \mathrm{Dom}\,\mathcal{F} &=\left\{(\hat{P},P)\in\mathbb{S}^n\times\mathbb{S}^n| R_{11}(P) \prec 0,\hat{R}_{11}(P) \prec 0,R_{22}(P) \succ 0,\hat{R}_{22}(P) \succ 0 \right\}.
\end{align*}
The aforementioned mapping $\mathcal{G}$ and  $\mathcal{F}$ serves as the fundamental foundation for constructing the nested iterative scheme. We begin by detailing its general properties, which collectively ensure the computational method's ability to operate iteratively.

First, we define following operators that will be used in the subsequent content:
\begin{equation}
\label{main_results:operators_k&n}
\begin{aligned}
\begin{bmatrix} K_1(P) \\ K_2(P) \end{bmatrix} &= -R(P)^{-1} \begin{bmatrix} S_1(P) \\ S_2(P) \end{bmatrix},
\begin{bmatrix} \hat{K}_1(\hat{P},P) \\ \hat{K}_2(\hat{P},P) \end{bmatrix} = -\hat{R}(P)^{-1} \begin{bmatrix} \hat{S}_1(\hat{P},P) \\ \hat{S}_2(\hat{P},P) \end{bmatrix},\\
\begin{bmatrix}
N_1(P,Z) \\
N_2(P,Z)
\end{bmatrix} &=
\begin{bmatrix}
B_1^{\top} Z + \sum_{l=1}^{r} D_{l,1}^{\top} Z \bigl(C_l + D_{l,1}K_1(P) + D_{l,2}K_2(P)\bigr) \\
B_2^{\top} Z + \sum_{l=1}^{r} D_{l,2}^{\top} Z \bigl(C_l + D_{l,1}K_1(P) + D_{l,2}K_2(P)\bigr)
\end{bmatrix},\\
\begin{bmatrix}
\hat{N}_1(\hat{P},P,\hat{Z},Z) \\
\hat{N}_2(\hat{P},P,\hat{Z},Z)
\end{bmatrix} &=
\begin{bmatrix}
\hat{B}_1^{\top} \hat{Z} + \sum_{l=1}^{r} \hat{D}_{l,1}^{\top} Z \bigl(\hat{C}_l + \hat{D}_{l,1}\hat{K}_1(\hat{P},P) + \hat{D}_{l,2}\hat{K}_2(\hat{P},P)\bigr) \\
\hat{B}_2^{\top} \hat{Z} + \sum_{l=1}^{r} \hat{D}_{l,2}^{\top} Z \bigl(\hat{C}_l + \hat{D}_{l,1}\hat{K}_1(\hat{P},P) + \hat{D}_{l,2}\hat{K}_2(\hat{P},P)\bigr)
\end{bmatrix},
\end{aligned}
\end{equation}
where $C_l,\hat{C}_l,B_i,D_{l,i},\hat{B}_i,\hat{D}_{l,i}(i = 1,2;1 \leq l \leq r)$ is defined in \eqref{mflqsdg:sde}, and $R(P), S_1(P),S_2(P),\hat{R}(P),\hat{S}_1(\hat{P},P),\\ \hat{S}_2(\hat{P},P)$ is defined in $\eqref{mflqsdg:gtare_auxiliarymatrix}$.

\begin{proposition}
\label{function_gf:proposition_1}
Let $P,Z,\hat{P}$ and $\hat{Z}$ be such that $P, P+Z \in \mathrm{Dom}\,\mathcal{G}$ and $(\hat{P},P),(\hat{P}+\hat{Z},P+Z)\in \mathrm{Dom}\,\mathcal{F}$. Then the feedback gains satisfy the following relation:
\begin{equation}
\label{feedback_1}
\begin{bmatrix} K_1(P+Z) \\ K_2(P+Z) \end{bmatrix} 
= \begin{bmatrix} K_1(P) \\ K_2(P) \end{bmatrix}-R(P+Z)^{-1}
\begin{bmatrix}
N_1(P,Z) \\
N_2(P,Z)
\end{bmatrix},
\end{equation}
\begin{equation}
\label{feedback_2}
\begin{bmatrix} \hat{K}_1(\hat{P}+\hat{Z},P+Z) \\ \hat{K}_2(\hat{P}+\hat{Z},P+Z) \end{bmatrix} 
= \begin{bmatrix} \hat{K}_1(\hat{P},P) \\ \hat{K}_2(\hat{P},P) \end{bmatrix}-\hat{R}(P+Z)^{-1}
\begin{bmatrix}
\hat{N}_1(\hat{P},P,\hat{Z},Z) \\
\hat{N}_2(\hat{P},P,\hat{Z},Z)
\end{bmatrix}.
\end{equation}
\end{proposition}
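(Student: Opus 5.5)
The identity \eqref{feedback_1} is equivalent to
\[
R(P+Z)\bigl(K(P+Z)-K(P)\bigr) = -N(P,Z),
\]
where $K(\cdot)=\begin{bmatrix}K_1(\cdot)\\K_2(\cdot)\end{bmatrix}$ and $N=\begin{bmatrix}N_1\\N_2\end{bmatrix}$. The hypotheses $P,P+Z\in\mathrm{Dom}\,\mathcal{G}$ give $R_{11}\prec 0$ and $R_{22}\succ 0$ at both points. This alone does not make $R(\cdot)$ invertible. However, $R(P)^{-1}$ and $R(P+Z)^{-1}$ already appear in the definitions \eqref{main_results:operators_k&n} and in the statement, so I will take invertibility as part of the standing assumptions. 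It can be justified via the Schur complement $R_{22}-R_{21}R_{11}^{-1}R_{12}\succ 0$, which is positive definite because $-R_{21}R_{11}^{-1}R_{12}\succeq 0$. With invertibility in hand, multiplying by $R(P+Z)^{-1}$ recovers \eqref{feedback_1}.

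To prove the displayed identity I use linearity in $P$. Write $D_l=[D_{l,1},D_{l,2}]$ and $B=[B_1,B_2]$. Then
\[
R(P+Z)=R(P)+\sum_l D_l^\top Z D_l,\qquad S(P+Z)=S(P)+B^\top Z+\sum_l D_l^\top Z C_l .
\]
Since $R(P)K(P)=-S(P)$, we get
\[
R(P+Z)K(P) = -S(P) + \sum_l D_l^\top Z D_l K(P).
\]
Subtracting this from $R(P+Z)K(P+Z)=-S(P+Z)$ gives
\[
R(P+Z)\bigl(K(P+Z)-K(P)\bigr) = -\Bigl(B^\top Z+\sum_l D_l^\top Z\bigl(C_l+D_lK(P)\bigr)\Bigr).
\]
Because $D_lK(P)=D_{l,1}K_1(P)+D_{l,2}K_2(P)$, the right-hand side is exactly $-N(P,Z)$.

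The identity \eqref{feedback_2} follows the same pattern with hatted quantities. Here $\hat{R}(P+Z)=\hat{R}(P)+\sum_l\hat{D}_l^\top Z\hat{D}_l$. The map $\hat{S}$ is affine jointly in $(\hat{P},P)$:
\[
\hat{S}(\hat{P}+\hat{Z},P+Z)=\hat{S}(\hat{P},P)+\hat{B}^\top\hat{Z}+\sum_l\hat{D}_l^\top Z\hat{C}_l .
\]
The same subtraction then yields
\[
\hat{R}(P+Z)\bigl(\hat{K}(\hat{P}+\hat{Z},P+Z)-\hat{K}(\hat{P},P)\bigr)=-\hat{N}(\hat{P},P,\hat{Z},Z).
\]
The only points needing care are bookkeeping ones. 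First, the noise terms pair $Z$ (not $\hat{Z}$) with $\hat{C}_l$ and $\hat{D}_l$. Second, the invertibility of $\hat{R}(P+Z)$ must be justified, which the same Schur-complement argument does using the sign conditions in $\mathrm{Dom}\,\mathcal{F}$.
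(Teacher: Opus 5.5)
Your proof is correct and follows essentially the paper's argument. In both, you multiply the gain increment by $R(P+Z)$ (resp.\ $\hat{R}(P+Z)$), use the affine dependence of $R,S$ (resp.\ $\hat{R},\hat{S}$) on their arguments together with $R(P)K(P)=-S(P)$, and read off $-N(P,Z)$ (resp.\ $-\hat{N}(\hat{P},P,\hat{Z},Z)$); the only difference is that the paper cites prior work for the first identity while you derive it directly. One small correction: your own Schur-complement observation shows that the sign conditions $R_{11}(\cdot)\prec 0$, $R_{22}(\cdot)\succ 0$ defining the domains \emph{do} imply invertibility of $R(\cdot)$ and $\hat{R}(\cdot)$, so you should drop the claim that they do not, along with treating invertibility as an extra standing assumption.
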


\begin{proof}
Equation \eqref{feedback_1} is derived from the conclusion of Proposition 3.1. in \cite{wang2025}. Then we verify the validity of Equation \eqref{feedback_2}.

Define the increment $\Delta = \begin{bmatrix} \hat{K}_1(\hat{P}+\hat{Z},P+Z) \\ \hat{K}_2(\hat{P}+\hat{Z},P+Z) \end{bmatrix} - \begin{bmatrix} \hat{K}_1(\hat{P},P) \\ \hat{K}_2(\hat{P},P) \end{bmatrix}$. 
By leveraging the definitions of $\hat{K}_1(\cdot,\cdot)$, $\hat{K}_2(\cdot,\cdot)$ in \eqref{main_results:operators_k&n}, and multiplying both sides of the equation by $\hat{R}(P+Z)$, we obtain:  
\[
\hat{R}(P+Z)\Delta = \hat{R}(P+Z) \hat{R}(P)^{-1} \begin{bmatrix} \hat{S}_1(\hat{P},P) \\ \hat{S}_1(\hat{P},P) \end{bmatrix} - \begin{bmatrix} \hat{S}_1(\hat{P}+\hat{Z},P+Z) \\ \hat{S}_1(\hat{P}+\hat{Z},P+Z) \end{bmatrix}.
\]
Note that,
\[
\hat{R}(P+Z) = \hat{R}(P) + \Delta \hat{R},\Delta \hat{R} = \begin{bmatrix}
\sum_{l=1}^{r} \hat{D}_{l,1}^{\top} Z \hat{D}_{l,1} & \sum_{l=1}^{r} \hat{D}_{l,1}^{\top} Z \hat{D}_{l,2} \\
\sum_{l=1}^{r} \hat{D}_{l,2}^{\top} Z \hat{D}_{l,1} & \sum_{l=1}^{r} \hat{D}_{l,2}^{\top} Z \hat{D}_{l,2}
\end{bmatrix}.
\]

Using the identity,
$
\hat{R}(P)^{-1} \begin{bmatrix} \hat{S}_1(\hat{P},P) \\ \hat{S}_2(\hat{P},P) \end{bmatrix} = -\begin{bmatrix} \hat{K}_1(\hat{P},P) \\ \hat{K}_2(\hat{P},P) \end{bmatrix},
$
we have
\[
\hat{R}(P+Z)\Delta = \begin{bmatrix} \hat{S}_1(\hat{P},P) \\ \hat{S}_2(\hat{P},P) \end{bmatrix}  - \Delta \hat{R} \begin{bmatrix} \hat{K}_1(\hat{P},P) \\ \hat{K}_2(\hat{P},P) \end{bmatrix}  - \begin{bmatrix} \hat{S}_1(\hat{P}+\hat{Z},P+Z) \\ \hat{S}_1(\hat{P}+\hat{Z},P+Z) \end{bmatrix}=-\begin{bmatrix}
\hat{N}_1(\hat{P},P,\hat{Z},Z) \\
\hat{N}_2(\hat{P},P,\hat{Z},Z)
\end{bmatrix} .
\]

Multiplying both sides by $\hat{R}(P+Z)^{-1}$ yields the relation.
\end{proof}

\begin{proposition}
\label{function_gf:proposition_2}
For any $P \in \mathrm{Dom}\,\mathcal{G}$,$(\hat{P},P) \in \mathrm{Dom}\,\mathcal{F}$ and for all $\varTheta_1,\hat{\varTheta}_1 \in \mathbb{R}^{m_1 \times n}, \varTheta_2,\hat{\varTheta}_2 \in \mathbb{R}^{m_2 \times n}$, we have
\begin{equation}
\label{function_gf:proposition_21}
\begin{aligned}
&\mathcal{G}(P) = P (A + B_1 \varTheta_1 + B_2 \varTheta_2) + (A + B_1 \varTheta_1 + B_2 \varTheta_2)^{\top} P + Q\\
& + \sum_{l=1}^{r} (C_{l} + D_{l,1} \varTheta_1 + D_{l,2} \varTheta_2)^{\top} P (C_{l} + D_{l,1} \varTheta_1 + D_{l,2} \varTheta_2)  + \begin{bmatrix} \varTheta_1 \\ \varTheta_2 \end{bmatrix}^{\top} R(0) \begin{bmatrix} \varTheta_1 \\ \varTheta_2\end{bmatrix} \\
&+ \begin{bmatrix} \varTheta_1 \\ \varTheta_2 \end{bmatrix}^{\top} \begin{bmatrix} S_1 \\ S_2 \end{bmatrix} + \begin{bmatrix} S_1 \\ S_2 \end{bmatrix}^{\top} \begin{bmatrix} \varTheta_1 \\ \varTheta_2 \end{bmatrix} - \begin{bmatrix} K_1(P) - \varTheta_1 \\ K_2(P) - \varTheta_2 \end{bmatrix}^{\top} R(P) \begin{bmatrix} K_1(P) - \varTheta_1 \\ K_2(P) - \varTheta_2\end{bmatrix},
\end{aligned}
\end{equation}
\begin{equation}
\label{function_gf:proposition_22}
\begin{aligned}
&\mathcal{F}(\hat{P},P) = \hat{P}(\hat{A}+\hat{B}_1 \hat{\varTheta}_1 + \hat{B}_2 \hat{\varTheta}_2) + (\hat{A}+\hat{B}_1 \hat{\varTheta}_1 + \hat{B}_2 \hat{\varTheta}_2)^{\top}\hat{P}+\hat{Q}\\
&+\sum_{l=1}^{r}(\hat{C}_{l} + \hat{D}_{l,1} \hat{\varTheta}_1 + \hat{D}_{l,2} \hat{\varTheta}_2)^{\top}P(\hat{C}_{l} + \hat{D}_{l,1} \hat{\varTheta}_1 + \hat{D}_{l,2} \hat{\varTheta}_2) + \begin{bmatrix} \hat{\varTheta}_1 \\ \hat{\varTheta}_2 \end{bmatrix}^{\top} \hat{R}(0) \begin{bmatrix} \hat{\varTheta}_1 \\ \hat{\varTheta}_2\end{bmatrix} \\
&+ \begin{bmatrix} \hat{\varTheta}_1 \\ \hat{\varTheta}_2 \end{bmatrix}^{\top} \begin{bmatrix} \hat{S}_1 \\ \hat{S}_2 \end{bmatrix} + \begin{bmatrix} \hat{S}_1 \\ \hat{S}_2 \end{bmatrix}^{\top} \begin{bmatrix} \hat{\varTheta}_1 \\ \hat{\varTheta}_2 \end{bmatrix} - \begin{bmatrix} \hat{K}_1(\hat{P},P) - \hat{\varTheta}_1 \\ \hat{K}_2(\hat{P},P)- \hat{\varTheta}_2 \end{bmatrix}^{\top} \hat{R}(P) \begin{bmatrix} \hat{K}_1(\hat{P},P) - \hat{\varTheta}_1 \\ \hat{K}_2(\hat{P},P)- \hat{\varTheta}_2 \end{bmatrix}.\\
\end{aligned}
\end{equation}
\end{proposition}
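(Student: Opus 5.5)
The plan is a direct completion-of-squares verification, carried out first for \eqref{function_gf:proposition_21} and then repeated verbatim for \eqref{function_gf:proposition_22}. Fix $P\in\mathrm{Dom}\,\mathcal{G}$ and write $\varTheta=\begin{bmatrix}\varTheta_1\\ \varTheta_2\end{bmatrix}$, $K(P)=\begin{bmatrix}K_1(P)\\ K_2(P)\end{bmatrix}$, $B=[B_1\ B_2]$, $D_l=[D_{l,1}\ D_{l,2}]$, $S=\begin{bmatrix}S_1\\ S_2\end{bmatrix}$. With this notation $R(P)=R(0)+\sum_l D_l^\top P D_l$ and $S(P)=B^\top P+\sum_l D_l^\top P C_l+S$. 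Since $R(P)$ is invertible on $\mathrm{Dom}\,\mathcal{G}$, we have $S(P)=-R(P)K(P)$, and hence $S(P)^\top R(P)^{-1}S(P)=K(P)^\top R(P)K(P)$.

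First, expand the right-hand side of \eqref{function_gf:proposition_21}. The terms $P(A+B\varTheta)+(A+B\varTheta)^\top P+\sum_l(C_l+D_l\varTheta)^\top P(C_l+D_l\varTheta)$ give $PA+A^\top P+\sum_l C_l^\top PC_l$, plus the cross terms $\varTheta^\top(B^\top P+\sum_l D_l^\top PC_l)+(\cdot)^\top$, plus $\varTheta^\top(\sum_l D_l^\top PD_l)\varTheta$. Adding $\varTheta^\top R(0)\varTheta+\varTheta^\top S+S^\top\varTheta+Q$ turns this into
\[
PA+A^\top P+\sum_l C_l^\top PC_l+Q+\varTheta^\top R(P)\varTheta+\varTheta^\top S(P)+S(P)^\top\varTheta .
\]
Next, expand the subtracted square:
\[
(K(P)-\varTheta)^\top R(P)(K(P)-\varTheta)=K(P)^\top R(P)K(P)-\varTheta^\top R(P)K(P)-K(P)^\top R(P)\varTheta+\varTheta^\top R(P)\varTheta .
\]
Using $R(P)K(P)=-S(P)$ and the symmetry of $R(P)$, this equals
\[
S(P)^\top R(P)^{-1}S(P)+\varTheta^\top S(P)+S(P)^\top\varTheta+\varTheta^\top R(P)\varTheta .
\]
Subtracting, the $\varTheta$-dependent terms cancel exactly, and what remains is $\mathcal{G}(P)$.

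The second identity follows by the same computation with hats. The one point requiring care is that, in the diffusion term, $\hat{P}$ enters only through the drift while $P$ sits in the quadratic diffusion term. Concretely, $\hat R(P)=\hat R(0)+\sum_l\hat D_l^\top P\hat D_l$ and $\hat S(\hat P,P)=\hat B^\top\hat P+\sum_l\hat D_l^\top P\hat C_l+\hat S$, so the cross terms produced by $\hat P(\hat A+\hat B\hat\varTheta)+(\cdot)^\top$ and $\sum_l(\hat C_l+\hat D_l\hat\varTheta)^\top P(\hat C_l+\hat D_l\hat\varTheta)$ assemble precisely into $\hat\varTheta^\top\hat S(\hat P,P)+\hat S(\hat P,P)^\top\hat\varTheta$. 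Likewise the quadratic terms assemble into $\hat\varTheta^\top\hat R(P)\hat\varTheta$. Then $\hat S(\hat P,P)=-\hat R(P)\hat K(\hat P,P)$ gives the same cancellation as before, leaving $\mathcal{F}(\hat P,P)$.

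No step is a genuine obstacle. The only things to get right are this bookkeeping of which matrix ($P$ or $\hat P$) multiplies which term in the hatted case, and the use of the symmetry of $R(P)$ and $\hat R(P)$ (which holds because $R_{12}=R_{21}^\top$ and $\bar R_{12}=\bar R_{21}^\top$).
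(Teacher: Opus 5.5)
Your proof is correct and follows essentially the same completion-of-squares route as the paper. For the hatted identity, the paper likewise expands $(\hat K-\hat\Theta)^{\top}\hat R(P)(\hat K-\hat\Theta)$ using $\hat R(P)\hat K=-\hat S(\hat P,P)$, splits $\hat R(P)$ into $\hat R(0)$ plus the $P$-weighted diffusion part, and regroups the cross terms into $\hat S(\hat P,P)$; the only difference is that the paper cites the literature for the first identity, whereas you verify it directly by the same computation.
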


\begin{proof}
Equation \eqref{function_gf:proposition_21} is established in \cite{Dragan2013book}. We provide a rigorous proof for Equation \eqref{function_gf:proposition_22} as follows.
Let $\hat{\Theta} = \begin{bmatrix}\hat{\varTheta}_1 \\ \hat{\varTheta}_2\end{bmatrix}$, we start by expanding the key quadratic residual term:
\begin{align*} 
&(\hat{K}(\hat{P},P)-\hat{\Theta})^\top \hat{R}(P) (\hat{K}(\hat{P},P)-\hat{\Theta}) \\
=& \hat{K}(\hat{P},P)^\top\hat{R}(P)\hat{K}(\hat{P},P) -\hat{\Theta}^\top\hat{R}(P)\hat{K}(\hat{P},P) - \hat{K}(\hat{P},P)^\top\hat{R}(P)\hat{\Theta} + \hat{\Theta}^\top\hat{R}(P)\hat{\Theta}\\
=&\hat{Q}(\hat{P},P) + \hat{\Theta}^\top\hat{S}(\hat{P},P) + \hat{S}(\hat{P},P)^\top\hat{\Theta} + \hat{\Theta}^\top\hat{R}(P)\hat{\Theta},
\end{align*}
where $\hat{Q}(\hat{P},P),\hat{S}(\hat{P},P),\hat{R}(P)$ is defined in \eqref{mflqsdg:gtare_auxiliarymatrix}

Based on the definition $\hat{R}_{ij}(P) = \hat{R}_{ij}(0)+\sum_{l=1}^r\hat{D}_{l,i}^\top P\hat{D}_{l,j}$, decompose the quadratic term:
$$\hat{\Theta}^\top\hat{R}(P)\hat{\Theta} = \hat{\Theta}^\top\hat{R}(0)\hat{\Theta} + \sum_{l=1}^r \big(\hat{D}_{l,1}\hat{\varTheta}_1+\hat{D}_{l,2}\hat{\varTheta}_2\big)^\top P \big(\hat{D}_{l,1}\hat{\varTheta}_1+\hat{D}_{l,2}\hat{\varTheta}_2\big).$$
Substitute the definition of $\hat{S}(\hat{P},P)$ to expand the linear cross term:
\begin{align*} 
&\hat{\Theta}^\top\hat{S}(\hat{P},P)+\hat{S}(\hat{P},P)^\top\hat{\Theta} =\hat{P}(\hat{B}_1\hat{\varTheta}_1+\hat{B}_2\hat{\varTheta}_2)+(\hat{B}_1\hat{\varTheta}_1+\hat{B}_2\hat{\varTheta}_2)^\top\hat{P} + \hat{\Theta}^\top\begin{bmatrix}\hat{S}_1\\\hat{S}_2\end{bmatrix}+\begin{bmatrix}\hat{S}_1\\\hat{S}_2\end{bmatrix}^\top\hat{\Theta} \\ &+ \sum_{l=1}^r\left[\hat{C}_l^\top P(\hat{D}_{l,1}\hat{\varTheta}_1+\hat{D}_{l,2}\hat{\varTheta}_2)+(\hat{D}_{l,1}\hat{\varTheta}_1+\hat{D}_{l,2}\hat{\varTheta}_2)^\top P\hat{C}_l\right]. 
\end{align*}
Plug the above identities into the right-hand side of the expanded residual term, then rearrange all terms. Combining the deterministic state terms, stochastic summation terms, and exogenous quadratic/linear terms, we recover the full expression of $\mathcal{F}(\hat{P},P)$ in \eqref{function_gf:proposition_22}.
The proof is complete.
\end{proof}

\begin{proposition}
\label{function_gf:proposition_3}
Let $P,Z,\hat{P},\hat{Z}$ satisfy $P, P+Z \in \mathrm{Dom}\,\mathcal{G}$ and $(\hat{P},P),(\hat{P}+\hat{Z},P+Z)\in \mathrm{Dom}\,\mathcal{F}$.
Then the following identity holds:
\begin{equation}
\label{function_gf:proposition_31}
\begin{aligned}
&\mathcal{G}(P + Z) = \mathcal{G}(P)+Z(A + B_1K_1(P)+B_2K_2(P))+(A + B_1K_1(P)+B_2K_2(P))^{\top}Z\\
&+\sum_{l=1}^{r}(C_{l} + D_{l,1}K_1(P)+D_{l,2}K_2(P))^{\top}Z(C_{l} + D_{l,1}K_1(P)+D_{l,2}K_2(P)) \\
&- N(P,Z)^{\top} R(P+Z)^{-1} N(P,Z),\quad \text{where} \,\,N(P,Z)=\begin{bmatrix}
N_1(P,Z) \\
N_2(P,Z)
\end{bmatrix};
\end{aligned}
\end{equation}
\begin{equation}
\label{function_gf:proposition_32}
\begin{aligned}
&\mathcal{F}(\hat{P}+\hat{Z},P + Z) = \mathcal{F}(\hat{P},P) +\hat{Z}(\hat{A} + \hat{B}_1\hat{K}_1(\hat{P},P)+\hat{B}_2\hat{K}_2(\hat{P},P))+(\hat{A} + \hat{B}_1\hat{K}_1(\hat{P},P)+\hat{B}_2\hat{K}_2(\hat{P},P))^{\top}\hat{Z}\\
&+\sum_{l=1}^{r}(\hat{C}_{l} + \hat{D}_{l,1}\hat{K}_1(\hat{P},P)+\hat{D}_{l,2}\hat{K}_1(\hat{P},P))^{\top}Z(\hat{C}_{l} + \hat{D}_{l,1}\hat{K}_1(\hat{P},P)+\hat{D}_{l,2}\hat{K}_1(\hat{P},P)) \\
&- \hat{N}(\hat{P},P,\hat{Z},Z)^{\top} \hat{R}(P+Z)^{-1} \hat{N}(\hat{P},P,\hat{Z},Z),\quad \text{where} \,\,N(\hat{P},P,\hat{Z},Z)=\begin{bmatrix}
\hat{N}_1(\hat{P},P,\hat{Z},Z) \\
\hat{N}_2(\hat{P},P,\hat{Z},Z)
\end{bmatrix}.
\end{aligned}
\end{equation}
\end{proposition}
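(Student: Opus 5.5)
The first identity \eqref{function_gf:proposition_31} is the non-mean-field perturbation formula; I would take it from Proposition 3.1 of \cite{wang2025}, just as was done for \eqref{feedback_1}. Alternatively, it follows from the same argument given below for \eqref{function_gf:proposition_32} after dropping the hats. The real work is \eqref{function_gf:proposition_32}, and the plan is to combine Proposition \ref{function_gf:proposition_2} with Proposition \ref{function_gf:proposition_1}.

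\textbf{Step 1.} Apply \eqref{function_gf:proposition_22} to the pair $(\hat{P}+\hat{Z},P+Z)\in\mathrm{Dom}\,\mathcal{F}$ with the test gain $\hat{\Theta}=\hat{K}(\hat{P},P)$. Here $\hat K=[\hat K_1;\hat K_2]$ is the optimal gain of the unperturbed pair. This gives $\mathcal{F}(\hat{P}+\hat{Z},P+Z)$ as a sum of four parts:
\begin{itemize}
\item the closed-loop terms $(\hat{P}+\hat{Z})\hat{A}_{\hat K}+\hat{A}_{\hat K}^\top(\hat{P}+\hat{Z})$, where $\hat{A}_{\hat K}=\hat A+\hat B_1\hat K_1+\hat B_2\hat K_2$;
\item the stochastic sum $\sum_l \hat{C}_{l,\hat K}^\top (P+Z)\hat{C}_{l,\hat K}$, where $\hat C_{l,\hat K}=\hat C_l+\hat D_{l,1}\hat K_1+\hat D_{l,2}\hat K_2$;
\item the fixed terms $\hat Q+\hat K^\top\hat R(0)\hat K+\hat K^\top\hat S+\hat S^\top\hat K$;
\item the residual $-\big(\hat{K}(\hat{P}+\hat{Z},P+Z)-\hat{K}(\hat{P},P)\big)^\top\hat{R}(P+Z)\big(\hat{K}(\hat{P}+\hat{Z},P+Z)-\hat{K}(\hat{P},P)\big)$.
\end{itemize}

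\textbf{Step 2.} Apply \eqref{function_gf:proposition_22} to $(\hat P,P)$ with the same $\hat\Theta=\hat K(\hat P,P)$. The residual term then vanishes, so $\mathcal F(\hat P,P)$ equals exactly the closed-loop expression with $(\hat P,P)$ in place of $(\hat P+\hat Z,P+Z)$. All parts are affine in $(\hat P,P)$ except the residual. Subtracting the two expressions therefore leaves $\hat Z\hat A_{\hat K}+\hat A_{\hat K}^\top\hat Z+\sum_l\hat C_{l,\hat K}^\top Z\hat C_{l,\hat K}$ plus the residual from Step 1.

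\textbf{Step 3.} Evaluate the residual using \eqref{feedback_2} of Proposition \ref{function_gf:proposition_1}. The gain difference equals $-\hat R(P+Z)^{-1}\hat N(\hat P,P,\hat Z,Z)$, and since $\hat R(P+Z)$ is symmetric and invertible, the residual collapses to $-\hat N^\top\hat R(P+Z)^{-1}\hat N$. This yields \eqref{function_gf:proposition_32}. The stochastic term should read $\hat D_{l,2}\hat K_2(\hat P,P)$; the $\hat K_1$ appearing in the statement's second factor looks like a typo.

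The only delicate point is bookkeeping. In $\mathcal F$ the stochastic weight is $P$, not $\hat P$, so the increment in the sum involves $Z$ rather than $\hat Z$. This matches the definition of $\hat N$, which pairs $\hat Z$ with $\hat B_i$ and $Z$ with $\hat D_{l,i}$. I also have to make sure that Proposition \ref{function_gf:proposition_2} holds for an arbitrary test gain at the perturbed point, which only requires $(\hat P+\hat Z,P+Z)\in\mathrm{Dom}\,\mathcal F$, as assumed. I expect no genuine obstacle beyond this consistency check.
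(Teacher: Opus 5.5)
Your proposal is correct and follows the paper's own route. You apply Proposition~\ref{function_gf:proposition_2} at $(\hat P+\hat Z,P+Z)$ with the test gain $\hat K(\hat P,P)$, identify the unperturbed terms as $\mathcal F(\hat P,P)$ (you make this explicit by a second application at $(\hat P,P)$, where the residual vanishes, while the paper leaves it implicit), and collapse the residual via \eqref{feedback_2}; your reading of $\hat D_{l,2}\hat K_1$ as a typo for $\hat D_{l,2}\hat K_2$ is also right. The only quibble is that the paper attributes \eqref{function_gf:proposition_31} to Proposition 3.3 rather than 3.1 of \cite{wang2025}, which is immaterial since your hat-free specialization of the same argument ($\hat P=P$, $\hat Z=Z$, unhatted coefficients) proves it directly.
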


\begin{proof}
Equation \eqref{function_gf:proposition_31} is derived from the conclusion of Proposition 3.3. in \cite{wang2025}. Then we verify the validity of Equation \eqref{function_gf:proposition_32}.
Starting from the definition of $\mathcal{F}$,
$
\mathcal{F}(\hat{P}+\hat{Z},P+Z) = (\hat{P}+\hat{Z})\hat{A} + \hat{A}^{\top}(\hat{P}+\hat{Z}) + \sum_{l=1}^{r} \hat{C}_l^{\top} (P+Z) \hat{C}_l + \hat{Q} - \hat{S}(\hat{P}+\hat{Z},P+Z)^{\top} \hat{R}(P+Z)^{-1} \hat{S}(\hat{P}+\hat{Z},P+Z).
$

From Proposition \ref{function_gf:proposition_2}, expanding and rearranging terms, we obtain $\mathcal{F}(\hat{P}+\hat{Z},P+Z) =$
\[
\begin{aligned}
&\hat{P} \hat{A} + \hat{A}^{\top} \hat{P} + \sum_{l=1}^{r}\hat{C}_{l}^{\top} P \hat{C}_{l}+ \hat{Q} +\hat{K}(\hat{P},P)^{\top}\hat{R}(P)\hat{K}(\hat{P},P) + \hat{S}(\hat{P},P)^{\top}\hat{K}(\hat{P},P)+ \hat{K}(\hat{P},P)^{\top}\hat{S}(\hat{P},P)\\
&+\hat{Z}(\hat{A} + \hat{B}_1\hat{K}_1(\hat{P},P)+\hat{B}_2\hat{K}_2(\hat{P},P)) +(\hat{A} + \hat{B}_1\hat{K}_1(\hat{P},P)+\hat{B}_2\hat{K}_2(\hat{P},P))^{\top} \hat{Z} \\
&+ \sum_{l=1}^{r}(\hat{C}_{l} + \hat{D}_{l,1}\hat{K}_1(\hat{P},P)+\hat{D}_{l,2}\hat{K}_1(\hat{P},P))^{\top}Z(\hat{C}_{l} + \hat{D}_{l,1}\hat{K}_1(\hat{P},P)+\hat{D}_{l,2}\hat{K}_1(\hat{P},P))\\
&- (\hat{K}(\hat{P}+\hat{Z},P+Z)-\hat{K}(\hat{P},P))^{\top} \hat{R}(P+Z) (\hat{K}(\hat{P}+\hat{Z},P+Z)-\hat{K}(\hat{P},P)),\,\, \text{where} \,\, \hat{K}(\hat{P},P)=\begin{bmatrix}\hat{K}_1(\hat{P},P)\\\hat{K}_2(\hat{P},P)\end{bmatrix}.
\end{aligned}
\]

From Proposition \ref{function_gf:proposition_1}, we derive:
\[
\begin{aligned}
&(\hat{K}(\hat{P}+\hat{Z},P+Z)-\hat{K}(\hat{P},P))^{\top} \hat{R}(P+Z) (\hat{K}(\hat{P}+\hat{Z},P+Z)-\hat{K}(\hat{P},P))  \\
&= \hat{N}(\hat{P},P,\hat{Z},Z)^{\top} \hat{R}(P+Z)^{-1} \hat{N}(\hat{P},P,\hat{Z},Z).
\end{aligned}
\]
Substitute the above term into the rearranged expression of $\mathcal{F}(\hat{P}+\hat{Z},P+Z)$, and the target identity is thus verified to hold.
\end{proof}

Now we consider coupled ARE of the form
\begin{equation}
\label{pr:are1}
\begin{aligned}
PA + A^{\top}P + \sum_{l=1}^{r} C_{l}^{\top}PC_{l} + Q - ( B^{\top}P + \sum_{l=1}^{r} D_{l}^{\top}PC_{l}  )^{\top} ( R + \sum_{l=1}^{r} D_{l}^{\top}PD_{l} )^{-1} ( B^{\top}P + \sum_{l=1}^{r} D_{l}^{\top}PC_{l}  ) = 0,
\end{aligned}
\end{equation}
\begin{equation}
\label{pr:are2}
\begin{aligned}
\hat{P}\hat{A} + \hat{A}^{\top}\hat{P} + \sum_{l=1}^{r} \hat{C}_{l}^{\top}P\hat{C}_{l} + \hat{Q} - ( \hat{B}^{\top}\hat{P} + \sum_{l=1}^{r} \hat{D}_{l}^{\top}P\hat{C}_{l}  )^{\top} ( \hat{R} + \sum_{l=1}^{r} \hat{D}_{l}^{\top}P\hat{D}_{l} )^{-1} ( \hat{B}^{\top}\hat{P} + \sum_{l=1}^{r} \hat{D}_{l}^{\top}P\hat{C}_{l}  ) = 0,
\end{aligned}
\end{equation}
where $B, D_l,\hat{B},\hat{D}_{l} \in \mathbb{R}^{n \times m}$ for $1 \leq l \leq r$, and $R,\hat{R}\in \mathbb{R}^{m \times m}$. The remaining parameters follow the previous definitions.

\begin{proposition}
\label{main_results:coupled_ARE}
Suppose that the parameters of the coupled ARE \eqref{pr:are1} and \eqref{pr:are2} satisfy the following conditions:
\begin{itemize}
    \item $R\succ0$ and $\hat{R} \succ0$.
    \item The system 
    \[
    [A, \bar{A}, \{C_{l}\}_{1 \leq l \leq r}, \{\bar{C}_{l}\}_{1 \leq l \leq r}; B, \bar{B}, \{D_{l}\}_{1 \leq l \leq r}, \{\bar{D}_{l}\}_{1 \leq l \leq r}]
    \]
    is \textit{MF-$L^2$-stabilizable}.
    \item There exists a set of matrices $\{E_l\}_{0 \leq l \leq r}$ such that $\sum_{l=0}^{r} E_l^{\top}E_l = Q$, and a matrix $\hat{E}$ satisfying $\hat{E}^{\top}\hat{E} = \hat{Q}$, such that the systems
    \[
    \left[ E_0,\{E_l\}_{ 1 \leq l \leq r}; A, \{C_l\}_{ 1 \leq l \leq r} \right] \quad \text{and} \quad [\hat{E}; \hat{A}]
    \]
    are both \textit{detectable}.
\end{itemize}
Then, the coupled ARE\eqref{pr:are1} and \eqref{pr:are2} admit a unique pair of stabilizing solutions $(P,\hat{P})\in \overline{\mathbb{S}}_{+}^n \times \overline{\mathbb{S}}_{+}^n$ such that 
\[
R + \sum_{l=1}^{r} D_{l}^{\top}PD_{l} \succ 0 \quad \text{and} \quad \hat{R} + \sum_{l=1}^{r} \hat{D}_{l}^{\top}P\hat{D}_{l} \succ 0.
\]
\end{proposition}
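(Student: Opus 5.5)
The structure of \eqref{pr:are1}--\eqref{pr:are2} is triangular. The first equation involves only $P$, and it is exactly the standard stochastic algebraic Riccati equation of a definite LQ problem with cost weights $(Q,0,R)$. Once $P$ is fixed, the second equation is a deterministic algebraic Riccati equation in $\hat{P}$ with modified data. The proof therefore proceeds in two stages: first solve \eqref{pr:are1} for $P$, then freeze $P$ and solve \eqref{pr:are2} for $\hat{P}$. Uniqueness is obtained stage by stage.

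\emph{Stage 1.} MF-$L^2$-stabilizability of the mean-field system gives, through the $L^2$-global integrability of $[A^\Theta,C^\Theta]$ required in Definition \ref{mflqsdg:l2+}, a gain $\Theta$ that stabilizes $[A,\{C_l\};B,\{D_l\}]$ in the $L^2$ sense. We then run the Kleinman--Newton iteration. Start from $K_0=\Theta$ and let $P_k$ solve the Lyapunov equation
\[
P_k(A+BK_k)+(A+BK_k)^\top P_k+\sum_{l=1}^r(C_l+D_lK_k)^\top P_k(C_l+D_lK_k)+Q+K_k^\top RK_k=0,
\]
with the update $K_{k+1}=-(R+\sum_l D_l^\top P_kD_l)^{-1}(B^\top P_k+\sum_l D_l^\top P_kC_l)$.

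Applying identity \eqref{function_gf:proposition_21} with $\varTheta=K_{k+1}$ (here there is a single player and $R\succ0$) shows two things:
\begin{itemize}
\item $P_k\succeq P_{k+1}\succeq0$;
\item the residual term $Q+K_{k+1}^\top RK_{k+1}+(P_k-P_{k+1})$-type terms is $\succeq$ a sum $\sum_l E_l^\top E_l$ plus nonnegative terms.
\end{itemize}
Detectability of $[E_0,\{E_l\};A,\{C_l\}]$ is preserved under the feedback $K_{k+1}$, because the augmented output $(E,R^{1/2}K_{k+1})$ still detects. Lemma \ref{main_results:stochastically_detectable_lemma} then gives stability of $\mathcal{L}^*$ for the closed loop with $K_{k+1}$. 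Monotone and bounded below, the sequence $P_k$ converges to some $P\succeq0$. This limit solves \eqref{pr:are1}, and $R+\sum_l D_l^\top PD_l\succeq R\succ0$ holds automatically. Applying the same detectability argument in the limit shows that $P$ is stabilizing.

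For uniqueness, suppose $P'$ is another stabilizing solution. Subtracting the two equations via identity \eqref{function_gf:proposition_31} yields a Lyapunov equation for $Z=P'-P$ whose closed-loop operator is stable and whose right-hand side is sign-definite, in both directions. Hence $Z=0$.

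\emph{Stage 2.} With $P$ fixed, define
\[
\tilde{R}=\hat{R}+\sum_l\hat{D}_l^\top P\hat{D}_l,\qquad \tilde{S}=\sum_l\hat{D}_l^\top P\hat{C}_l,\qquad \tilde{Q}=\hat{Q}+\sum_l\hat{C}_l^\top P\hat{C}_l .
\]
Since $P\succeq0$, we have $\tilde{R}\succ0$. Equation \eqref{pr:are2} then becomes the deterministic ARE
\[
\hat{P}\hat{A}+\hat{A}^\top\hat{P}+\tilde{Q}-(\hat{B}^\top\hat{P}+\tilde{S})^\top\tilde{R}^{-1}(\hat{B}^\top\hat{P}+\tilde{S})=0.
\]
Removing the cross term through $\hat{A}_0=\hat{A}-\hat{B}\tilde{R}^{-1}\tilde{S}$ gives the state weight
\[
\tilde{Q}-\tilde{S}^\top\tilde{R}^{-1}\tilde{S}=\hat{Q}+\sum_l\hat{C}_l^\top P\hat{C}_l-\tilde{S}^\top\tilde{R}^{-1}\tilde{S}.
\]
This weight is $\succeq\hat{Q}$ by a Schur-complement argument, because the block matrix
\[
\begin{bmatrix}\sum_l\hat{C}_l^\top P\hat{C}_l&\tilde{S}^\top\\ \tilde{S}&\tilde{R}\end{bmatrix}
\]
is positive semidefinite; indeed it equals
\[
\sum_l[\hat{C}_l\ \hat{D}_l]^\top P[\hat{C}_l\ \hat{D}_l]+\mathrm{diag}(0,\hat{R}).
\]
Stabilizability of $(\hat{A},\hat{B})$ follows from MF-$L^2$-stabilizability, since $\mathbb{E}[X]$ obeys the ODE with matrices $\hat{A}+\hat{B}\bar{\Theta}$. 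Detectability of $[\hat{E};\hat{A}]$, together with the dominance of the weight over $\hat{Q}$, gives detectability of the cross-term-free pair. Classical deterministic ARE theory, or the same Kleinman iteration with identity \eqref{function_gf:proposition_32}, then yields a unique stabilizing $\hat{P}\succeq0$.

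\emph{Main obstacle.} The hardest step is expected to be transferring detectability and stabilizability through the feedback and cross-term transformations. Concretely, one must show that $[E;A+BK]$ with the augmented output remains stochastically detectable, and that the stabilizability of the mean-field system really delivers stabilizability of both $(A,\{C_l\};B,\{D_l\})$ and $(\hat{A},\hat{B})$. The approach here is to construct explicit output-injection matrices, following the approach of \cite{wang2025}.
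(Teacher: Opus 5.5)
Your proof is correct and has the same triangular structure as the paper's proof. The main difference is what you prove versus what the paper imports.

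\textbf{First equation.} For \eqref{pr:are1} the paper simply cites two results: Proposition~1 of \cite{Li2025}, which reduces MF-$L^2$-stabilizability to the two subsystems, and Proposition~1 of \cite{wang2025}, which gives existence and uniqueness of the stabilizing $P\succeq0$. You instead derive the needed stabilizability directly from Definition~\ref{mflqsdg:l2+}, via integrability of $[A^\Theta,C^\Theta]$ and $\frac{d}{dt}\mathbb{E}[X]=(\hat A+\hat B\bar\Theta)\mathbb{E}[X]$. You then reprove the Riccati part by Kleinman--Newton. This is self-contained but longer.

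When you write out the detectability step there, split the feedback output over the drift and each diffusion channel in separate blocks, as the paper does with $E_{l,(h)}$ and $\tilde\varTheta^{(h)}$ in Step~2 of Theorem~\ref{theorem:coupled_iteration}. If you append $R^{1/2}K_{k+1}$ only to $E_0$, one injection matrix cancels $BK_{k+1}$ in the drift but leaves $D_lK_{k+1}$ in the diffusion terms.

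\textbf{Second equation.} For \eqref{pr:are2} your reduction coincides with the paper's: $\hat A-\hat B\tilde R^{-1}\tilde S=\hat A+\hat B\hat T(0,P)$, and your weight $W:=\tilde Q-\tilde S^\top\tilde R^{-1}\tilde S$ equals $\check E^\top\check E$.

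However, $W\succeq\hat Q$ alone does not move detectability from $\hat A$ to $\hat A-\hat B\tilde R^{-1}\tilde S$. You need $Wv=0\Rightarrow \hat Ev=0$ and $\tilde R^{-1}\tilde Sv=0$. Your block matrix does give this. At the minimiser $u=-\tilde R^{-1}\tilde Sv$ of the Schur form, the block quadratic form vanishes; it dominates $u^\top\hat Ru$ with $\hat R\succ0$, so $u=0$. The PBH test (or the injection $-\hat B\hat R^{-1/2}$) then returns you to $(\hat E,\hat A)$. This is exactly why the paper's $\check E$ carries the row $\sqrt{\hat R(0)}\,\hat T(0,P)$.

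Finally, closing with classical deterministic ARE theory (stabilizable plus detectable gives a unique stabilizing $\hat P\succeq0$) is the right tool. It is in fact more accurate than the paper's appeal to Lemma~\ref{main_results:stochastically_detectable_lemma}. That lemma only characterizes stability through a Lyapunov equation and does not by itself produce a Riccati solution.
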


\begin{proof}
According to Proposition 1 in \cite{Li2025}, the system 
\[
[A, \bar{A}, \{C_{l}\}_{1 \leq l \leq r}, \{\bar{C}_{l}\}_{1 \leq l \leq r}; B, \bar{B}, \{D_{l}\}_{1 \leq l \leq r}, \{\bar{D}_{l}\}_{1 \leq l \leq r}]
\]
is \textit{MF-$L^2$-stabilizable} if and only if the subsystems \[[A, \{C_{l}\}_{1 \leq l \leq r}; B, \{D_{l}\}_{1 \leq l \leq r} ]\] and $[\hat{A}; \hat{B}]$ are $L^2$-asymptotically stable.
By virtue of Proposition 1 in \cite{wang2025}, the considered ARE \eqref{pr:are1} admits a unique stabilizing solution $P\in \overline{\mathbb{S}}_{+}^n$ satisfying $R + \sum_{l=1}^{r} D_{l}^{\top}PD_{l} \succ 0$ and $\hat{R} + \sum_{l=1}^{r} \hat{D}_{l}^{\top}P\hat{D}_{l} \succ 0$.

Combining Proposition \ref{function_gf:proposition_1} and Proposition \ref{function_gf:proposition_2}, the coupled ARE \eqref{pr:are2} can be equivalently rewritten as
\[
\begin{aligned}
&\hat{P}\left( \hat{A}  + \hat{B} \hat{T}(0,P) \right) + \left( \hat{A}  + \hat{B} \hat{T}(0,P) \right)^{\top} \hat{P}+ \sum_{l=1}^{r} \left(\hat{C}_l  + \hat{D}_l \hat{T}(0,P)\right)^{\top} P \left(\hat{C}_l  + \hat{D}_l \hat{T}(0,P)\right)+\hat{Q} \\
&+\hat{T}(0,P)^{\top}\hat{R}(0)\hat{T}(0,P)-\big(\hat{T}(\hat{P},P)-\hat{T}(0,P)\big)^{\top}\hat{R}(P)\big(\hat{T}(\hat{P},P)-\hat{T}(0,P)\big) = 0,
\end{aligned}
\]
where the auxiliary matrix function is defined as
$
\hat{T}(\hat{P},P) = - \left( \hat{R} + \sum_{l=1}^{r} \hat{D}_{l}^{\top}P\hat{D}_{l} \right)^{-1} \left( \hat{B}^{\top}\hat{P} + \sum_{l=1}^{r} \hat{D}_{l}^{\top}P\hat{C}_{l} \right).
$
Further algebraic simplification yields
\[
\begin{aligned}
&\hat{P}\left( \hat{A}  + \hat{B} \hat{T}(0,P) \right) + \left( \hat{A}  + \hat{B} \hat{T}(0,P) \right)^{\top} \hat{P}+ \check{E}^{\top} \check{E} + \left( \hat{B}^{\top}\hat{P}\right)^{\top} \left( \hat{R} + \sum_{l=1}^{r} \hat{D}_{l}^{\top}P\hat{D}_{l} \right)^{-1} \left( \hat{B}^{\top}\hat{P} \right) = 0,
\end{aligned}
\]
where the matrix $\check{E}$ is given by
\[
\check{E} = 
\begin{bmatrix}
\hat{E} \\
\sqrt{\hat{R}(0)}\hat{T}(0,P)\\
\sqrt{\sum_{l=1}^{r} \left(\hat{C}_l  + \hat{D}_l \hat{T}(0,P)\right)^{\top} P \left(\hat{C}_l  + \hat{D}_l \hat{T}(0,P)\right)} 
\end{bmatrix} \in \mathbb{R}^{(q+m+n) \times n}.
\]
Since the subsystem $[\hat{E}; \hat{A}]$ is detectable by assumption, it follows immediately that the augmented system $[\check{E}; \hat{A}]$ is also detectable. Applying Lemma \ref{main_results:stochastically_detectable_lemma}, we conclude that the coupled ARE possesses a unique stabilizing solution $\hat{P}\in \overline{\mathbb{S}}_{+}^n$. This completes the proof.
\end{proof}

\subsection{Convergence Analysis of Iterative Sequence}

For each triple $(P,\hat{P},Z,\hat{Z},L,\hat{L})$ satisfying $P,P + Z\in\mathrm{Dom}\,\mathcal{G}$, $(\hat{P},P),(\hat{P}+\hat{Z},P+Z)\in\mathrm{Dom}\,\mathcal{F}$, and $(L,\hat{L}) \in \mathcal{A}$, the linear operators $\mathcal{L}^{*}_{P}$, $\mathcal{L}^{*}_{P+Z}$, $\hat{\mathcal{L}}^{*}_{(\hat{P},P)}$, and $\hat{\mathcal{L}}^{*}_{(\hat{P}+\hat{Z},P+Z)}$ are defined as follows:
\begin{equation}
\label{linear_operators_pl}
\begin{aligned}
    &\mathcal{L}^{*}_{P}(Y) = Y ( A_{(0)}+ B_1\check{K}_1(P)+B_2L)+( A_{(0)}+ B_1\check{K}_1(P)+B_2L )^{\top}Y +\\
    &\sum_{l=1}^r (C_{l,(0)}+D_{l,1}\check{K}_1(P)+D_{l,2}L)^{\top} Y (C_{l,(0)}+D_{l,1}\check{K}_1(P)+D_{l,2}L), \, \forall \,Y \in \mathbb{S}^n,
\end{aligned}
\end{equation}
\begin{equation}
\label{linear_operators_pzl}
\begin{aligned}
    &\mathcal{L}^{*}_{P+Z}(Y) = Y ( A_{(0)}+ B_1\check{K}_1(P+Z)+B_2L)+( A_{(0)}+ B_1\check{K}_1(P+Z)+B_2L)^{\top} Y +\\
    & \sum_{l=1}^r (C_{l,(0)}+ D_{l,1}\check{K}_1(P+Z)+D_{l,2}L)^{\top} Y (C_{l,(0)}+D_{l,1}\check{K}_1(P+Z)+D_{l,2}L), \, \forall \, Y \in \mathbb{S}^n,
\end{aligned}
\end{equation}
\begin{equation}
\label{linear_operators_hpl}
\begin{aligned}
    &\hat{\mathcal{L}}^{*}_{(\hat{P},P)}(Y) = Y ( \hat{A}_{(0)}+ \hat{B}_1\check{K}_1(\hat{P},P)+\hat{B}_2\hat{L})+( \hat{A}_{(0)}+ \hat{B}_1\check{K}_1(\hat{P},P)+\hat{B}_2\hat{L} )^{\top}Y , \, \forall \,Y \in \mathbb{S}^n,
\end{aligned}
\end{equation}
\begin{equation}
\label{linear_operators_hpzl}
\begin{aligned}
    &\hat{\mathcal{L}}^{*}_{(\hat{P}+\hat{Z},P+Z)}(Y) = Y ( \hat{A}_{(0)}+ \hat{B}_1\check{K}_1(\hat{P}+\hat{Z},P+Z)+\hat{B}_2\hat{L})+\\
    &\quad \quad ( \hat{A}_{(0)}+ \hat{B}_1\check{K}_1(\hat{P}+\hat{Z},P+Z)+\hat{B}_2\hat{L})^{\top} Y , \, \forall \, Y \in \mathbb{S}^n,
\end{aligned}
\end{equation}
where
\begin{equation}
\label{iterative:matrices_evolve0}
\begin{cases}
        A_{(0)} = A +  \begin{bmatrix} B_1 & B_2 \end{bmatrix} \begin{bmatrix} K_1(0) \\ K_2(0) \end{bmatrix} ,\hat{A}_{(0)} = \hat{A} +  \begin{bmatrix} \hat{B}_1 & \hat{B}_2 \end{bmatrix} \begin{bmatrix} \hat{K}_1(0,0) \\ \hat{K}_2(0,0) \end{bmatrix},\\
        C_{l,(0)} = C_{l} + \begin{bmatrix} D_{l1} & D_{l2} \end{bmatrix} \begin{bmatrix} K_1(0) \\ K_2(0) \end{bmatrix}, \hat{C}_{l,(0)} = \hat{C}_{l} + \begin{bmatrix} \hat{D}_{l1} & \hat{D}_{l2} \end{bmatrix} \begin{bmatrix} \hat{K}_1(0,0) \\ \hat{K}_2(0,0) \end{bmatrix},  \quad  1 \leq l \leq r,\\
        \begin{bmatrix} \check{K}_1(P) \\ \check{K}_2(P) \end{bmatrix}
= \begin{bmatrix} K_1(P) \\ K_2(P) \end{bmatrix}-\begin{bmatrix} K_1(0) \\ K_2(0) \end{bmatrix},
\begin{bmatrix} \check{K}_1(\hat{P},P) \\ \check{K}_2(\hat{P},P) \end{bmatrix}
= \begin{bmatrix} \hat{K}_1(\hat{P},P) \\ \hat{K}_2(\hat{P},P) \end{bmatrix}-\begin{bmatrix} \hat{K}_1(0,0) \\ \hat{K}_2(0,0) \end{bmatrix}.
\end{cases}
\end{equation}
and $\begin{bmatrix} K_1(P) \\ K_2(P) \end{bmatrix},\begin{bmatrix} \hat{K}_1(\hat{P},P) \\ \hat{K}_2(\hat{P},P) \end{bmatrix}$ are defined in \eqref{main_results:operators_k&n}.

\begin{lemma}
\label{lemma:coupled_joint_compare}
Assume $0\in \mathrm{Dom}\, \mathcal{G}$, $(0,0)\in \mathrm{Dom}\, \mathcal{F}$ and $(L,\hat{L})\in\mathcal{A}$. Let $P,Z,\hat{P},\hat{Z}\in \mathbb{S}^n$ satisfy:
\begin{enumerate}
    \item $P,P+Z\in\mathrm{Dom}\,\mathcal{G}$,\quad $(\hat{P},P),(\hat{P}+\hat{Z},P+Z)\in\mathrm{Dom}\,\mathcal{F}$;
    \item $P,Z$ satisfy 
    \begin{equation}
    \label{main_results:g_interrelated_equation}
    \begin{aligned}
        & 0=\mathcal{G}(P)+Z(A_{(0)} + B_1\check{K}_1(P) + B_2\check{K}_2(P) )+(A_{(0)} +  B_1\check{K}_1(P) + B_2\check{K}_2(P))^{\top}Z \\
        &\quad +\sum_{l=1}^{r}(C_{l,(0)} +  D_{l,1}\check{K}_1(P) + D_{l,2}\check{K}_2(P))^{\top}Z(C_{l,(0)} +  D_{l,1}\check{K}_1(P) + D_{l,2}\check{K}_2(P))\\
        &\quad -\left[B_2^{\top} Z + \sum_{l=1}^{r}D_{l,2}^{\top} Z (C_{l,(0)} +  D_{l,1}\check{K}_1(P) + D_{l,2}\check{K}_2(P))\right]^{\top} R_{22}(P+Z)^{-1}\\
        &\quad \times \left[B_2^{\top} Z + \sum_{l=1}^{r}D_{l,2}^{\top} Z (C_{l,(0)} +  D_{l,1}\check{K}_1(P) + D_{l,2}\check{K}_2(P))\right];
    \end{aligned}
    \end{equation}
    \item The coupled pair $(P,\hat{P})$ and the correction pair $(Z,\hat{Z})$ satisfy
    \begin{equation}
    \label{main_results:f_interrelated_equation}
    \begin{aligned}
        0=&\mathcal{F}(\hat{P},P)+\hat{Z}\left(\hat{A}_{(0)} + \hat{B}_1\check{K}_1(\hat{P},P) + \hat{B}_2\check{K}_2(\hat{P},P) \right) +\left(\hat{A}_{(0)} +  \hat{B}_1\check{K}_1(\hat{P},P) + \hat{B}_2\check{K}_2(\hat{P},P)\right)^{\top}\hat{Z}\\
        &+\sum_{l=1}^{r}\left(\hat{C}_{l,(0)} +  \hat{D}_{l,1}\check{K}_1(\hat{P},P) + \hat{D}_{l,2}\check{K}_2(\hat{P},P)\right)^{\top}Z\left(\hat{C}_{l,(0)} +  \hat{D}_{l,1}\check{K}_1(\hat{P},P) + \hat{D}_{l,2}\check{K}_2(\hat{P},P)\right)\\
        &-\left[\hat{B}_2^{\top} \hat{Z} + \sum_{l=1}^{r}\hat{D}_{l,2}^{\top} Z \left(\hat{C}_{l,(0)} +  \hat{D}_{l,1}\check{K}_1(\hat{P},P) + \hat{D}_{l,2}\check{K}_2(\hat{P},P)\right)\right]^{\top} \hat{R}_{22}(P+Z)^{-1}\\
        & \quad \times \left[\hat{B}_2^{\top} \hat{Z} + \sum_{l=1}^{r}\hat{D}_{l,2}^{\top} Z \left(\hat{C}_{l,(0)} +  \hat{D}_{l,1}\check{K}_1(\hat{P},P) + \hat{D}_{l,2}\check{K}_2(\hat{P},P)\right)\right].
    \end{aligned}
    \end{equation}
\end{enumerate}
Denote $(P_{L},\hat{P}_{L})$ as the stabilizing solution of the coupled ARE \eqref{mflqsdg:coupledARE_l}. Then the following joint bidirectional implications hold:
\begin{enumerate}
    \item[(\romannumeral1).] If \,$\operatorname{Spec} \,\mathcal{L}^*_{P}\subset\mathbb{C}^{-}$ and $\operatorname{Spec}\,\hat{\mathcal{L}}^{*}_{(\hat{P},P)}\subset\mathbb{C}^{-}$, then
    $$\tilde{P}_L\succeq P+Z,\quad \hat{P}_{L}\succeq \hat{P}+\hat{Z};$$
    \item[(\romannumeral2).] If $\tilde{P}_L\succeq P+Z$ and $\hat{P}_{L}\succeq \hat{P}+\hat{Z}$, then
    $$\operatorname{Spec} \,\mathcal{L}^*_{P+Z}\subset\mathbb{C}^{-},\quad \operatorname{Spec}\,\hat{\mathcal{L}}^{*}_{(\hat{P}+\hat{Z},P+Z)}\subset\mathbb{C}^{-}.$$
\end{enumerate}
\end{lemma}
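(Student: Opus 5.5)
The plan is to follow the comparison argument of the non-mean-field case in \cite{wang2025}, run in parallel for the two components, and to exploit the triangular coupling: the second equation of \eqref{mflqsdg:coupledARE_l} depends on $P$ only through the stochastic terms $\sum_l \hat{C}_{lL}^{\top}P\hat{C}_{lL}$ and $\hat{D}_{l,1}^{\top}P\hat{D}_{l,1}$. The key step is a pair of identities for the differences
\[
\Delta := \tilde{P}_L-(P+Z),\qquad \hat{\Delta}:=\hat{P}_L-(\hat{P}+\hat{Z}).
\]
First I will rewrite the ARE for $\tilde{P}_L$ using the completion-of-squares formula of Proposition \ref{function_gf:proposition_2}, with $\varTheta_1=K_1(0)+\check{K}_1(P+Z)$ and $\varTheta_2=K_2(0)+L$. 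Then I will rewrite $\mathcal{G}(P+Z)$ using Proposition \ref{function_gf:proposition_3}, eliminating $\mathcal{G}(P)$ through \eqref{main_results:g_interrelated_equation}. Subtracting the two should give
\[
\mathcal{L}^*_{P+Z}(\Delta)+\Xi=0,
\]
where $\Xi$ is a sum of terms of the form $-M^{\top}R_{11}(\cdot)^{-1}M\succeq 0$ (since $R_{11}\prec0$) and $+N^{\top}R_{22}(\cdot)N\succeq0$ (since $R_{22}\succ0$). Here $N$ involves $L-\check{K}_2(P+Z)$ or the $R_{22}(P+Z)^{-1}$-weighted correction term in \eqref{main_results:g_interrelated_equation}. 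In short, the maximizer's mismatch enters with a negative-definite weight and the minimizer's with a positive one, exactly as in the scalar-game case.

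The hat component is treated in the same way, using \eqref{feedback_2}, \eqref{function_gf:proposition_22}, \eqref{function_gf:proposition_32} and \eqref{main_results:f_interrelated_equation}. This should yield
\[
\hat{\mathcal{L}}^*_{(\hat{P}+\hat{Z},P+Z)}(\hat{\Delta})+\sum_{l=1}^r \hat{\Gamma}_l^{\top}\Delta\,\hat{\Gamma}_l+\hat{\Xi}=0,
\]
with $\hat{\Xi}\succeq0$ and suitable closed-loop matrices $\hat{\Gamma}_l$. The $\Delta$-term appears because $\hat{\mathcal{L}}^*$ has no diffusion part, so the stochastic contributions carry $P$ rather than $\hat{P}$.

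I will also need the alternative form of each identity written with the operator at the old point, i.e.\ with $\mathcal{L}^*_P$ and $\hat{\mathcal{L}}^*_{(\hat{P},P)}$ and $\Delta$ replaced by $\tilde{P}_L-P-Z$. This version is obtained by expanding the stabilizing ARE around the gain $\check{K}_1(P)$ instead of $\check{K}_1(P+Z)$. The $R_{11}$-weighted square then becomes $-(\check{K}_1(\tilde{P}_L)-\check{K}_1(P))^{\top}R_{11}(\tilde{P}_L)(\cdots)\succeq0$. The $Z$-dependent terms recombine, via \eqref{main_results:g_interrelated_equation}, into a square weighted by $R_{22}(P+Z)^{-1}$ with a nonnegative sign. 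Producing this form is the step I expect to be the main obstacle. The $B_2$-block correction in \eqref{main_results:g_interrelated_equation} uses $\check{K}_2(P)$ rather than $L$, so one must verify that the cross terms between $L-\check{K}_2(P)$ and the $R_{22}(P+Z)^{-1}$-term combine into a perfect square. I would first try a Schur-complement argument on the $2\times2$ block $R(P+Z)$, with $R_{11}\prec0\prec R_{22}$, writing the residual as the difference of two squares with the correct signs.

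With these identities, the implications follow.

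For (i), since $\operatorname{Spec}\mathcal{L}^*_P\subset\mathbb{C}^-$, the inverse operator $-(\mathcal{L}^*_P)^{-1}$ is positive. This is the standard fact that a stable generator of a stochastic Lyapunov operator has a positive resolvent, and it is implicit in Lemma \ref{main_results:stochastically_detectable_lemma}. The first identity then gives $\Delta\succeq0$. Feeding $\Delta\succeq0$ into the hat identity, the forcing $\sum_l\hat{\Gamma}_l^{\top}\Delta\hat{\Gamma}_l+\hat{\Xi}$ is positive semidefinite, and $\operatorname{Spec}\hat{\mathcal{L}}^*_{(\hat{P},P)}\subset\mathbb{C}^-$ gives $\hat{\Delta}\succeq0$ by the deterministic Lyapunov theorem.

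For (ii), assume $\Delta\succeq0$ and $\hat{\Delta}\succeq0$. The new-point identity has the form $\mathcal{L}^*_{P+Z}(\Delta)+\Xi=0$. Following \cite{wang2025}, I argue by contradiction: suppose $\mathcal{L}^*_{P+Z}$ has an eigenvalue $\lambda$ with $\operatorname{Re}\lambda\ge0$. The adjoint (forward) operator then has a positive semidefinite eigenvector $X\neq0$, by the Krein--Rutman property of positive operators. Pairing gives
\[
2\operatorname{Re}\lambda\,\langle\Delta,X\rangle=-\langle\Xi,X\rangle\le0,
\]
which forces $\Xi X=0$. Hence the closed loop with gain $\check{K}_1(P+Z)$ coincides on $X$ with the closed loop of the stabilizing gain of $\tilde{P}_L$. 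That closed loop is stable by the stabilizing property of $\tilde{P}_L$, which contradicts $\operatorname{Re}\lambda\ge0$.

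The hat operator is handled identically, using $\hat{\Delta}\succeq0$, the already obtained $\Delta\succeq0$, and the stability of the closed loop of $\hat{P}_L$.
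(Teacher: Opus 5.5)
Your proposal is correct. Its skeleton is the paper's: comparison identities for $\Delta=\tilde P_L-(P+Z)$ and $\hat\Delta=\hat P_L-(\hat P+\hat Z)$ with positive semidefinite forcing, closed in (i) by positivity of the inverse Lyapunov operator and in (ii) by tying the new closed loop to the stable closed loop of $(\tilde P_L,\hat P_L)$. The differences are worth recording.

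\emph{The coupling term.} The paper delegates the single-system halves to Lemma~3.8 of \cite{wang2025}. In its coupled part it writes the same $\Xi_L$ (resp.\ $\Xi_{L,+}$) in the equation for $\hat P+\hat Z$ and in the one for $\hat P_L$, although their diffusion parts carry $P+Z$ and $\tilde P_L$ respectively. So the term $\sum_l\hat\Gamma_l^{\top}\Delta\hat\Gamma_l$ that you keep is silently dropped there. Your cascade is exactly what makes it harmless: in (i) you first get $\Delta\succeq0$ and then $\hat\Delta\succeq0$, and in (ii) $\Delta\succeq0$ is a hypothesis. Your bookkeeping is the more careful one.

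\emph{The step you flagged as the main obstacle.} It closes by plain completion of squares, with no Schur complement needed. Let $\delta=L-\check K_2(P)$ and let $N_2$ be the bracket in \eqref{main_results:g_interrelated_equation}. Replacing $\check K_2(P)$ by $L$ in the $Z$-terms adds $\delta^{\top}N_2+N_2^{\top}\delta+\delta^{\top}\bigl(\sum_l D_{l,2}^{\top}ZD_{l,2}\bigr)\delta$. Together with $\delta^{\top}R_{22}(P)\delta$ and \eqref{main_results:g_interrelated_equation}, this equals $(R_{22}(P+Z)\delta+N_2)^{\top}R_{22}(P+Z)^{-1}(R_{22}(P+Z)\delta+N_2)$. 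Its hat analogue is exactly the paper's $\hat H_1^{\top}\hat H_1$, which the paper reaches instead through Proposition~\ref{function_gf:proposition_3} and the block decomposition \eqref{coupled_decomp}.

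\emph{Closing (ii).} The paper stacks the forcing into $\hat E^{\top}\hat E$, exhibits an output injection sending the new closed loop to that of $\hat P_L$, and invokes Lemma~\ref{main_results:stochastically_detectable_lemma}. Your eigenvector argument is in effect the proof of that lemma specialized to this case. That makes it self-contained, but it needs a positive semidefinite eigenvector. For the stochastic operator, take $\lambda$ to be the spectral abscissa of the resolvent-positive forward operator, which is real and carries such an eigenvector. The pairing then reads $\lambda\langle\Delta,X\rangle=-\langle\Xi,X\rangle$, with no factor $2\operatorname{Re}$. In the deterministic hat case, an eigenvector $v$ of the closed-loop matrix with $X=vv^{*}$ suffices.

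\emph{A notational slip.} The gain in your $R_{11}(\tilde P_L)$-weighted square must be the maximizer's gain of the $L$-problem, $-R_{11}(\tilde P_L)^{-1}\bigl(B_1^{\top}\tilde P_L+\sum_l D_{l,1}^{\top}\tilde P_L C_{lL}+S_L\bigr)$. It is not $\check K_1(\tilde P_L)$ from \eqref{iterative:matrices_evolve0}, which is built from the full game matrix $R(\tilde P_L)$.
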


\begin{proof}
\textbf{Proof of (\romannumeral1).}
\textbf{Part 1: Single-system part.} 
The hypotheses $0\in \mathrm{Dom}\, \mathcal{G}$, $(L,\hat{L})\in\mathcal{A}$, $P,P+Z\in\mathrm{Dom}\, \mathcal{G}$ together with \eqref{main_results:g_interrelated_equation} and $\operatorname{Spec} \,\mathcal{L}^*_{P}\subset\mathbb{C}^{-}$ fulfill exactly the assumptions of Lemma~3.8~(i) in \cite{wang2025}. Hence we directly obtain
\[
\tilde{P}_L \succeq P+Z .
\]

\noindent \textbf{Part 2: Coupled system part.} 
By the expansion properties of $\mathcal{F}$, Propositions~\ref{function_gf:proposition_2} and~\ref{function_gf:proposition_3} for $\mathcal{F}$, we have
\begin{equation}
\label{coupled_expansion}
\begin{aligned}
&\mathcal{F}(\hat{P}+\hat{Z},P+Z)=\hat{\mathcal{L}}^{*}_{(\hat{P},P)}(\hat{P}+\hat{Z}) + \Xi_{L} -\hat{\Phi}^{\top}\hat{R}(P+Z)\hat{\Phi},
\end{aligned}
\end{equation}
where $\hat{\Phi}=J(\hat{P},P)-\check{K}(\hat{P}+\hat{Z},P+Z)$ and 
\[
\begin{aligned}
\Xi_{L} & = \hat{Q}_{L}+\check{K}_1^{\top}(\hat{P},P)\hat{R}_{12}\hat{L}+\hat{L}^{\top}\hat{R}_{21}\check{K}_1(\hat{P},P)+\check{K}_1^{\top}(\hat{P},P)\hat{R}_{11}\check{K}_1(\hat{P},P)\\
&\quad +\sum_{l=1}^{r}\bigl(\hat{C}_{l,(0)} +  \hat{D}_{l,1}\check{K}_1(\hat{P},P) + \hat{D}_{l,2}\hat{L}\bigr)^{\top}Z\bigl(\hat{C}_{l,(0)} +  \hat{D}_{l,1}\check{K}_1(\hat{P},P) + \hat{D}_{l,2}\hat{L}\bigr),
\end{aligned}
\]
$J(\hat{P},P)=\begin{bmatrix}\check{K}_1(\hat{P},P) \\ \hat{L}\end{bmatrix}$, $\hat{\mathcal{L}}^{*}_{(\hat{P},P)}$ is deined in \eqref{linear_operators_hpl}, $\check{K}_1(\cdot,\cdot)$ and $\check{K}(\cdot,\cdot)$ is the vector of gain matrices defined in \eqref{iterative:matrices_evolve0}.

From Proposition~\ref{function_gf:proposition_3} and the coupled equations \eqref{main_results:f_interrelated_equation}, \eqref{coupled_expansion}, the pair $(\hat{P}+\hat{Z},P+Z)$ is shown to satisfy a Riccati-type equation:
\begin{equation}
\label{coupled_ric_eq}
\hat{\mathcal{L}}^{*}_{(\hat{P},P)}(\hat{P}+\hat{Z}) + \Xi_{L} - \hat{\Phi}^{\top}\hat{R}(P+Z)\hat{\Phi} + \hat{\Lambda}^{\top}\hat{\mathbb{R}}_{22}^{\sharp}(P+Z)^{-1}\hat{\Lambda}=0,
\end{equation}
where
\[
\hat{\mathbb{R}}_{22}^{\sharp}(P+Z)=\hat{R}_{11}(P+Z)-\hat{R}_{12}(P+Z)\hat{R}_{22}(P+Z)^{-1}\hat{R}_{21}(P+Z),\quad \hat{\Lambda}=\hat{N}_1-\hat{R}_{12}(P+Z)\hat{R}_{22}(P+Z)^{-1}\hat{N}_2,
\]
\[
\begin{aligned}
\hat{N}_1&=\hat{B}_1^{\top} \hat{Z} + \sum_{l=1}^{r}\hat{D}_{l,1}^{\top} Z \bigl(\hat{C}_{l,(0)} + \hat{D}_{l,1}\check{K}_1(\hat{P},P) +\hat{D}_{l,2}\check{K}_2(\hat{P},P) \bigr),\\
\hat{N}_2&=\hat{B}_2^{\top} \hat{Z} + \sum_{l=1}^{r}\hat{D}_{l,2}^{\top} Z \bigl(\hat{C}_{l,(0)} + \hat{D}_{l,1}\check{K}_1(\hat{P},P) +\hat{D}_{l,2}\check{K}_2(\hat{P},P) \bigr).
\end{aligned}
\]

On the other hand, the stabilizing solution $(P_{L},\hat{P}_{L})$ of the coupled ARE system \eqref{mflqsdg:coupledARE_l} obeys the equation (again by the analogue of Proposition~\ref{function_gf:proposition_2})
\begin{equation}
\label{coupled_sol_eq}
\hat{\mathcal{L}}^{*}_{(\hat{P},P)}(\hat{P}_{L}) + \Xi_{L} - \hat{\Psi}^{\top}\hat{\Omega}\hat{\Psi}=0,
\end{equation}
where
\[
\hat{\Psi}=\check{K}_1(\hat{P},P) - \check{K}_{L}(\hat{P}_{L},\tilde{P}_L),\qquad
\hat{\Omega}= \hat{R}_{11} + \sum_{l=1}^{r} \hat{D}_{l,1}^{\top}\tilde{P}_{L}\hat{D}_{l,1},
\]
and $\check{K}_{L}(\hat{P}_{L},\tilde{P}_L)=
-\left(\hat{R}_{11} + \sum_{l=1}^r \hat{D}_{l,1}^{\top}\tilde{P}_{L}\hat{D}_{l,1}\right)^{-1}\left(\hat{B}^{\top}_1\hat{P}_{L} + \sum_{l=1}^r D^{\top}_{l,1}\tilde{P}_{L}C_{l} + \hat{S}_{L}\right).
$

Now we decompose the quadratic term appearing in \eqref{coupled_ric_eq}. Exactly as in the single-system proof (Lemma~3.8) in \cite{wang2025}, one verifies that
\begin{equation}
\label{coupled_decomp}
\hat{\Phi}^{\top}\hat{R}(P+Z)\hat{\Phi}
= \hat{H}_1^{\top}\hat{H}_1 + \hat{\Lambda}^{\top}\hat{\mathbb{R}}_{22}^{\sharp}(P+Z)^{-1}\hat{\Lambda},\quad \hat{H}_1 = \begin{bmatrix}
\hat{R}_{22}(P+Z)^{-\frac12}\hat{R}_{21}(P+Z) & \hat{R}_{22}(P+Z)^{\frac12}
\end{bmatrix} \hat{\Phi}.
\end{equation}

Insert \eqref{coupled_decomp} into \eqref{coupled_ric_eq}; the $\hat{\Lambda}$-terms cancel and we obtain
\begin{equation}
\label{coupled_ric_simple}
\hat{\mathcal{L}}^{*}_{(\hat{P},P)}(\hat{P}+\hat{Z}) + \Xi_{L} - \hat{H}_1^{\top}\hat{H}_1 = 0.
\end{equation}
Subtracting \eqref{coupled_ric_simple} from \eqref{coupled_sol_eq} yields
\[
\hat{\mathcal{L}}^{*}_{(\hat{P},P)}(\Delta_{\hat{P}}) + \bigl(\hat{H}_1^{\top}\hat{H}_1 - \hat{\Psi}^{\top}\hat{\Omega}\hat{\Psi}\bigr) = 0,
\qquad \Delta_{\hat{P}} := \hat{P}_{L} - (\hat{P}+\hat{Z}).
\]
Since by hypothesis $\operatorname{Spec}\,\hat{\mathcal{L}}^{*}_{(\hat{P},P)}\subset\mathbb{C}^{-}$ and $\hat{H}_1^{\top}\hat{H}_1 - \hat{\Psi}^{\top}\hat{\Omega}\hat{\Psi} \succeq 0$, the Theorem 3.2.2. and Theorem 2.7.5 in \cite{Dragan2013book} immediately gives $\Delta_{\hat{P}}\succeq 0$, i.e.
\[
\hat{P}_{L} \succeq \hat{P}+\hat{Z}.
\]
Thus the proof of (\romannumeral1) is complete.
 
\textbf{Proof of (\romannumeral2).}
\textbf{Part 1: Single-system part.} 
The assumptions $\tilde{P}_L\succeq P+Z$ together with the interrelated equation \eqref{main_results:g_interrelated_equation} and the domain conditions are exactly those of Lemma~3.8~(ii) in \cite{wang2025}. Hence
\[
\operatorname{Spec} \,\mathcal{L}^*_{P+Z}\subset\mathbb{C}^{-}.
\]
 
\textbf{Part 2: Coupled system part.}
We now rewrite the coupled ARE at the shifted point $(\hat{P}+\hat{Z},P+Z)$. By the same expansion property,
\begin{equation}
\label{coupled_ric_eq_shift}
\begin{aligned}
\hat{\mathcal{L}}^{*}_{(\hat{P}+\hat{Z},P+Z)}(\hat{P}+\hat{Z}) + \Xi_{L,+}
&- \hat{\Phi}_+^{\top}\hat{R}(P+Z)\hat{\Phi}_+ + \hat{\Lambda}^{\top}\hat{\mathbb{R}}_{22}^{\sharp}(P+Z)^{-1}\hat{\Lambda}=0,
\end{aligned}
\end{equation}
where $\Xi_{L,+}$ is obtained from $\Xi_{L}$ by replacing $(\hat{P},P)$ with $(\hat{P}+\hat{Z},P+Z)$, and
\[
\hat{\Phi}_+ = J(\hat{P}+\hat{Z},P+Z)-\check{K}(\hat{P}+\hat{Z},P+Z)
= \begin{bmatrix} 0 \\ \hat{L} - \check{K}_2(\hat{P}+\hat{Z},P+Z) \end{bmatrix}.
\]
Note that $\hat{\Lambda}$ is unchanged because it is built from $\hat{N}_1,\hat{N}_2$ which are fixed by the original $(\hat{P},P)$ and $(\hat{Z},Z)$.  
Applying the decomposition \eqref{coupled_decomp} at $(\hat{P}+\hat{Z},P+Z)$ and observing that the first block of $\hat{\Phi}_+$ vanishes, we obtain simply
\begin{equation}
\label{coupled_decomp_shift}
\hat{\Phi}_+^{\top}\hat{R}(P+Z)\hat{\Phi}_+ = \hat{H}_2^{\top}\hat{H}_2,\quad \hat{H}_2 = \begin{bmatrix}
\hat{R}_{22}(P+Z)^{-\frac12}\hat{R}_{21}(P+Z) & \hat{R}_{22}(P+Z)^{\frac12}
\end{bmatrix} \hat{\Phi}_+ .
\end{equation}

Substituting \eqref{coupled_decomp_shift} into \eqref{coupled_ric_eq_shift} gives
\begin{equation}
\label{coupled_ric_shift_simple}
\hat{\mathcal{L}}^{*}_{(\hat{P}+\hat{Z},P+Z)}(\hat{P}+\hat{Z}) + \Xi_{L,+} - \hat{H}_2^{\top}\hat{H}_2 + \hat{\Lambda}^{\top}\hat{\mathbb{R}}_{22}^{\sharp}(P+Z)^{-1}\hat{\Lambda}=0 .
\end{equation}
The stabilizing solution $\hat{P}_{L}$ satisfies, at the same shifted point,
\begin{equation}
\label{coupled_sol_eq_shift}
\hat{\mathcal{L}}^{*}_{(\hat{P}+\hat{Z},P+Z)}(\hat{P}_{L}) + \Xi_{L,+} - \hat{\Psi}_+^{\top}\hat{\Omega}\hat{\Psi}_+=0,
\end{equation}
where
\[
\hat{\Psi}_+ = \check{K}_1(\hat{P}+\hat{Z},P+Z) - \check{K}_{L}(\hat{P}_{L},\tilde{P}_L),
\]
and $\hat{\Omega}$ is the same matrix as in Part~1 (it depends only on the stabilizing solution $P_{L}$).

Subtract \eqref{coupled_ric_shift_simple} from \eqref{coupled_sol_eq_shift} and recall $\Delta_{\hat{P}} = \hat{P}_{\hat{L}} - (\hat{P}+\hat{Z})$, we arrive at
\[
\hat{\mathcal{L}}^{*}_{(\hat{P}+\hat{Z},P+Z)}(\Delta_{\hat{P}}) + \hat{H}_2^{\top}\hat{H}_2 - \hat{\Psi}_+^{\top}\hat{\Omega}\hat{\Psi}_+ - \hat{\Lambda}^{\top}\hat{\mathbb{R}}_{22}^{\sharp}(P+Z)^{-1}\hat{\Lambda}=0 .
\]
Now construct the matrices $\hat{E}$ as:
\[
\hat{E} = \begin{bmatrix}
\hat{H}_2 \\
\sqrt{-\,\hat{\Omega}}\; \hat{\Psi}_+ \\
\sqrt{-\hat{\mathbb{R}}_{22}^{\sharp}(P+Z)^{-1}}\; \hat{\Lambda}
\end{bmatrix}.
\]
A direct calculation yields
$
\hat{E}^{\top}\hat{E}
= \hat{H}_2^{\top}\hat{H}_2 - \hat{\Psi}_+^{\top}\hat{\Omega}\hat{\Psi}_+ - \hat{\Lambda}^{\top}\hat{\mathbb{R}}_{22}^{\sharp}(P+Z)^{-1}\hat{\Lambda},
$
and consequently
\[
\hat{\mathcal{L}}^{*}_{(\hat{P}+\hat{Z},P+Z)}(\Delta_{\hat{P}}) + \hat{E}^{\top}\hat{E} = 0.
\]
Define the auxiliary matrix
\[
\hat{\varTheta} = \begin{bmatrix}
\mathbb{O}_{n \times m_2}^{\top} \\
-\sqrt{-\hat{\Omega}^{-1}}^{\top}\hat{B}_1^{\top} \\
\mathbb{O}_{n \times m_1}^{\top}
\end{bmatrix}^{\top}.
\]
Then the augmented system
\[
\bigl[\hat{A}_{(0)} + \hat{B}_1\check{K}_1(\hat{P}+\hat{Z},P+Z) + \hat{B}_2\hat{L} + \hat{\varTheta}\hat{E}\bigr]
\]
is stable, which implies that the system $[\hat{E}; \hat{A}_{(0)} + \hat{B}_1\check{K}_1(\hat{P}+\hat{Z},P+Z) + \hat{B}_2\hat{L}]$ is detectable. By the Lemma~\ref{main_results:stochastically_detectable_lemma} we conclude
\[
\operatorname{Spec}\,\hat{\mathcal{L}}^{*}_{(\hat{P}+\hat{Z},P+Z)}\subset\mathbb{C}^{-}.
\]
This completes the proof of (\romannumeral2).
\end{proof}

Before presenting the coupled counterpart of Theorem 3.9 in \cite{wang2025}, we introduce the linear operators associated with the coupled iteration sequences.  
For each $k\ge 0$, let $P^{(k)},\hat P^{(k)},Z^{(k)},\hat Z^{(k)}\in\mathbb{S}^n$. Define
\begin{equation}
\label{linear_operators_pkl}  
\begin{aligned}
&\mathcal{L}^*_{P^{(k)}}(Y) =
Y\bigl(A_{(0)}+B_1\check K_1(P^{(k)})+B_2L\bigr)
+\bigl(A_{(0)}+B_1\check K_1(P^{(k)})+B_2L\bigr)^\top Y \notag\\
&\quad +\sum_{l=1}^r \bigl(C_{l,(0)}+D_{l,1}\check K_1(P^{(k)})+D_{l,2}L\bigr)^\top Y
\bigl(C_{l,(0)}+D_{l,1}\check K_1(P^{(k)})+D_{l,2}L\bigr), \, \forall \,Y \in \mathbb{S}^n,
\end{aligned} 
\end{equation}
\begin{equation}
\label{linear_operators_k&+1}
\begin{aligned}
&\mathcal{L}^{(k,k+1)}(Y) =
Y\bigl(A_{(k)}+B_2T_{(k+1)}\bigr)+\bigl(A_{(k)}+B_2T_{(k+1)}\bigr)^\top Y \notag\\
&\quad +\sum_{l=1}^r \bigl(C_{l,(k)}+D_{l,2}T_{(k+1)}\bigr)^\top Y
\bigl(C_{l,(k)}+D_{l,2}T_{(k+1)}\bigr),  \, \forall \,Y \in \mathbb{S}^n,
\end{aligned}
\end{equation}
and for the coupled system
\begin{equation}
\label{linear_operators_hpkl}  
\begin{aligned}
\hat{\mathcal{L}}^*_{(\hat P^{(k)},P^{(k)})}(Y) =
Y\bigl(\hat A_{(0)}+\hat B_1\check K_1(\hat P^{(k)},P^{(k)})+\hat B_2\hat L\bigr)+\bigl(\hat A_{(0)}+\hat B_1\check K_1(\hat P^{(k)},P^{(k)})+\hat B_2\hat L\bigr)^\top Y,  \, \forall \,Y \in \mathbb{S}^n,\\
\end{aligned} 
\end{equation}
\begin{equation}
\label{linear_operators2_hk&+1}
\begin{aligned}
&\hat{\mathcal{L}}^{(k,k+1)}(Y) =
Y\bigl(\hat A_{(k)}+\hat B_2\hat T_{(k+1)}\bigr)+\bigl(\hat A_{(k)}+\hat B_2\hat T_{(k+1)}\bigr)^\top Y,  \, \forall \,Y \in \mathbb{S}^n, 
\end{aligned}
\end{equation}
where
\begin{equation}
\label{iterative:matrices_evolve_1}
\begin{cases}
        A_{(k)} = A +  \begin{bmatrix} B_1 & B_2 \end{bmatrix} \begin{bmatrix} K_1(P^{(k)}) \\ K_2(P^{(k)}) \end{bmatrix} ,\hat{A}_{(k)} = \hat{A} +  \begin{bmatrix} \hat{B}_1 & \hat{B}_2 \end{bmatrix} \begin{bmatrix} \hat{K}_1(\hat{P}^{(k)},P^{(k)}) \\ \hat{K}_2(\hat{P}^{(k)},P^{(k)}) \end{bmatrix},\\
        C_{l,(k)} = C_{l} +  \begin{bmatrix} D_{l1} & D_{l2} \end{bmatrix} \begin{bmatrix} K_1(P^{(k)}) \\ K_2(P^{(k)}) \end{bmatrix}, \hat{C}_{l,(k)} = \hat{C}_{l} +  \begin{bmatrix} \hat{D}_{l1} & \hat{D}_{l2} \end{bmatrix} \begin{bmatrix} \hat{K}_1(\hat{P}^{(k)},P^{(k)}) \\ \hat{K}_2(\hat{P}^{(k)},P^{(k)}) \end{bmatrix},  \quad  1 \leq l \leq r ,\\
        T_{(k+1)}=-R_{22}(P^{(k)}+Z^{(k)})^{-1}N_{2,(k)},\hat T_{(k+1)}=-\hat R_{22}(P^{(k)}+Z^{(k)})^{-1}\hat N_{2,(k)},
\end{cases}
\end{equation}
and
\begin{equation}
\label{iterative:matrices_evolve_2} 
\begin{aligned}
    &\begin{bmatrix} N_{1,(k)} \\ N_{2,(k)} \end{bmatrix}=\begin{bmatrix} B_1^{\top} Z^{(k)} + \sum_{l=1}^{r}D_{l,1}^{\top} Z^{(k)} C_{l,(k)} \\ B_2^{\top} Z^{(k)} + \sum_{l=1}^{r}D_{l,2}^{\top} Z^{(k)} C_{l,(k)}  \end{bmatrix},
    \begin{bmatrix} \hat{N}_{1,(k)} \\ \hat{N}_{2,(k)} \end{bmatrix}=\begin{bmatrix} \hat{B}_1^{\top} \hat{Z}^{(k)} + \sum_{l=1}^{r}\hat{D}_{l,1}^{\top} Z^{(k)} \hat{C}_{l,(k)} \\ \hat{B}_2^{\top} \hat{Z}^{(k)} + \sum_{l=1}^{r}\hat{D}_{l,2}^{\top} Z^{(k)} \hat{C}_{l,(k)}  \end{bmatrix}
\end{aligned}
\end{equation}
with $\begin{bmatrix} K_1(P) \\ K_2(P) \end{bmatrix},\begin{bmatrix} \hat{K}_1(\hat{P},P) \\ \hat{K}_2(\hat{P},P) \end{bmatrix}$ is defined in \eqref{main_results:operators_k&n}.

\begin{theorem}
\label{theorem:coupled_iteration}
Assume the following conditions hold:
\begin{itemize}
    \item $R_{22}\succ0$, $\hat{R}_{22}\succ0$ and there exists a pair $(L,\hat{L})\in\mathcal{A}$.
    \item There exists a set of matrices $\{E_{l,(0)}\}_{0 \leq l \leq r}$ and a matrix $\hat{E}_{(0)}$ satisfying
    \begin{equation}
    \sum_{l=0}^r E_{l,(0)}^\top E_{l,(0)} = Q - S^\top(0)R(0)^{-1}S(0),\\
    \hat{E}_{(0)}^\top \hat{E}_{(0)} = \hat Q - \hat{S}^\top(0)\hat{R}(0)^{-1}\hat{S}(0),
    \end{equation}
    such that the systems
    \[
    \left[ E_0,\{E_l\}_{ 1 \leq l \leq r}; A_{(0)}, \{C_{1,(0)}\}_{ 1 \leq l \leq r} \right] \quad \text{and} \quad [\hat{E}_{(0)};\hat{A}_{(0)}]
    \]
    are both \textit{detectable}.
\end{itemize}
Construct the four sequences $\{Z^{(k)}\},\{P^{(k)}\},\{\hat Z^{(k)}\},\{\hat P^{(k)}\}$ as follows:
\begin{enumerate}
    \item Set $P^{(0)}=0$, $\hat P^{(0)}=0$.  
    Let $Z^{(0)}$ be the stabilizing solution of the ARE
    \begin{equation}
    \label{eq:initial_Z}
    \begin{aligned}
    &Z A_{(0)}+A_{(0)}^\top Z+\sum_{l=1}^r C_{l,(0)}^\top Z C_{l,(0)}
    +\mathcal{G}(P^{(0)}) \\
    &-\Bigl(B_2^\top Z+\sum_{l=1}^r D_{l,2}^\top Z C_{l,(0)}\Bigr)^\top
    \bigl(R_{22}+\sum_{l=1}^r D_{l,2}^\top Z D_{l,2}\bigr)^{-1}
    \Bigl(B_2^\top Z+\sum_{l=1}^r D_{l,2}^\top Z C_{l,(0)}\Bigr)=0,
    \end{aligned}
    \end{equation}
    and define $P^{(1)}=Z^{(0)}$.  
    Thereafter, let $\hat Z^{(0)}$ be the stabilizing solution of
    \begin{equation}
    \label{eq:initial_hatZ}
    \begin{aligned}
    &\hat{Z} \hat{A}_{(0)}+\hat{A}_{(0)}^\top \hat{Z}+\sum_{l=1}^r \hat{C}_{l,(0)}^\top P^{(1)}\hat{C}_{l,(0)}
    +\mathcal{F}(\hat{P}^{(0)},P^{(0)})\\
    &-\Bigl(\hat{B}_2^\top \hat{Z}+\sum_{l=1}^r \hat{D}_{l,2}^\top P^{(1)}\hat{C}_{l,(0)}\Bigr)^\top
    \bigl(\hat{R}_{22}+\sum_{l=1}^r \hat{D}_{l,2}^\top P^{(1)}\hat{D}_{l,2}\bigr)^{-1}
    \Bigl(\hat{B}_2^\top \hat{Z}+\sum_{l=1}^r \hat{D}_{l,2}^\top P^{(1)}\hat{C}_{l,(0)}\Bigr)=0,
    \end{aligned}
    \end{equation}
    and set $\hat{P}^{(1)}=\hat{Z}^{(0)}$.

    \item For $k\ge1$, having obtained $P^{(k)},\hat{P}^{(k)}$ and the auxiliary matrices
    $\mathcal{G}(P^{(k)}),\mathcal{F}(\hat{P}^{(k)},P^{(k)})$ from the previous step, let $Z^{(k)}$ be the stabilizing solution of
    \begin{equation}
    \label{eq:iter_Z}
    \begin{aligned}
    &Z A_{(k)}+A_{(k)}^\top Z+\sum_{l=1}^r C_{l,(k)}^\top Z C_{l,(k)}
    +\mathcal{G}(P^{(k)})\\
    &-\Bigl(B_2^\top Z+\sum_{l=1}^r D_{l,2}^\top Z C_{l,(k)}\Bigr)^\top
    \bigl(R_{22}(P^{(k)})+\sum_{l=1}^r D_{l,2}^\top Z D_{l,2}\bigr)^{-1}
    \Bigl(B_2^\top Z+\sum_{l=1}^r D_{l,2}^\top Z C_{l,(k)}\Bigr)=0,
    \end{aligned}
    \end{equation}
    put $P^{(k+1)}=P^{(k)}+Z^{(k)}$, and then let $\hat{Z}^{(k)}$ be the stabilizing solution of
    \begin{equation}
    \label{eq:iter_hatZ}
    \begin{aligned}
    &\hat{Z} \hat{A}_{(k)}+\hat{A}_{(k)}^\top \hat{Z}+\sum_{l=1}^r \hat{C}_{l,(k)}^\top P^{(k+1)}\hat{C}_{l,(k)}+\mathcal{F}(\hat{P}^{(k)},P^{(k)}) \\
    &-\Bigl(\hat{B}_2^\top \hat{Z}+\sum_{l=1}^r \hat{D}_{l,2}^\top P^{(k+1)}\hat{C}_{l,(k)}\Bigr)^\top
    \bigl(\hat{R}_{22}+\sum_{l=1}^r \hat{D}_{l,2}^\top P^{(k+1)}\hat{D}_{l,2}\bigr)^{-1}
    \Bigl(\hat{B}_2^\top \hat{Z}+\sum_{l=1}^r \hat{D}_{l,2}^\top P^{(k+1)}\hat{C}_{l,(k)}\Bigr)=0,
    \end{aligned}
    \end{equation}
    and set $\hat{P}^{(k+1)}=\hat{P}^{(k)}+\hat{Z}^{(k)}$.
\end{enumerate}
Then, for every $k=0,1,2,\dots$ the following properties hold jointly:
\begin{enumerate}
    \item[(a$_k$)] $\operatorname{Spec} \,\mathcal{L}^{(k,k+1)}\subset\mathbb{C}^{-}$ \emph{and} $\operatorname{Spec} \,\hat{\mathcal{L}}^{(k,k+1)}\subset\mathbb{C}^{-}$;
    \item[(b$_k$)] For any $(L,\hat{L})\in\mathcal{A}$,
    $\tilde P_L\succeq P^{(k)}+Z^{(k)}$,
    $\hat P_{L}\succeq \hat P^{(k)}+\hat Z^{(k)}$,
    $\operatorname{Spec} \,\mathcal{L}^*_{P^{(k+1)}}\subset\mathbb{C}^{-}$,
    $\operatorname{Spec} \,\hat{\mathcal{L}}^*_{(\hat P^{(k+1)},P^{(k+1)})}\subset\mathbb{C}^{-}$,
    where $(\tilde P_L,\hat{P}_{L})$ are the stabilizing solutions of the coupled ARE system~\eqref{mflqsdg:coupledARE_l};
    \item[(c$_k$)] $P^{(k)},P^{(k)}+Z^{(k)}\in\mathrm{Dom}\,\mathcal{G}$,
    $(\hat{P}^{(k)},P^{(k)}),(\hat{P}^{(k)}+\hat{Z}^{(k)},P^{(k)}+Z^{(k)})\in\mathrm{Dom}\,\mathcal{F}$, and
    \begin{align*}
    \mathcal{G}(P^{(k)}+Z^{(k)})&=
    -\bigl(N_{1,(k)}-R_{12}(P^{(k)}+Z^{(k)})R_{22}(P^{(k)}+Z^{(k)})^{-1}N_{2,(k)}\bigr)^\top \\
    &\times \mathbb{R}_{22}^{\sharp}(P^{(k)}+Z^{(k)})^{-1}
    \bigl(N_{1,(k)}-R_{12}(P^{(k)}+Z^{(k)})R_{22}(P^{(k)}+Z^{(k)})^{-1}N_{2,(k)}\bigr),\\
    \mathcal{F}(\hat{P}^{(k)}+\hat{Z}^{(k)},P^{(k)}+Z^{(k)})&=
    -\bigl(\hat{N}_{1,(k)}-\hat{R}_{12}(P^{(k)}+Z^{(k)})\hat{R}_{22}^{-1}(P^{(k)}+Z^{(k)})\hat{N}_{2,(k)}\bigr)^\top \\
    &\times \hat{\mathbb{R}}_{22}^{\sharp}(P^{(k)}+Z^{(k)})^{-1}
    \bigl(\hat{N}_{1,(k)}-\hat{R}_{12}(P^{(k)}+Z^{(k)})\hat{R}_{22}^{-1}(P^{(k)}+Z^{(k)})\hat{N}_{2,(k)}\bigr),
    \end{align*}
\end{enumerate}
where $\mathbb{R}_{22}^{\sharp}(P)=R_{11}(P)-R_{12}(P)R_{22}(P)^{-1}R_{21}(P),\hat{\mathbb{R}}_{22}^{\sharp}(P)=\hat{R}_{11}(P)-\hat{R}_{12}(P)\hat{R}_{22}(P)^{-1}\hat{R}_{21}(P)$, $N_{1,(k)},\hat{N}_{1,(k)},N_{2,(k)}$, and $\hat{N}_{2,(k)}$ are defined in \eqref{iterative:matrices_evolve_2}.

Moreover, the sequences converge monotonically:
\[
\lim_{k\to\infty}P^{(k)}=\tilde P,\qquad
\lim_{k\to\infty}\hat P^{(k)}=\hat{P},
\]
where $(\tilde P,\hat{P})$ are the stabilizing solutions of the stochastic coupled GTARE \eqref{mflqsdg:gtare}.
\end{theorem}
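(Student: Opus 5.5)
The argument is an induction on $k$ that proves (a$_k$), (b$_k$), (c$_k$) together. It follows the single-system Theorem 3.9 of \cite{wang2025}, with Lemma~\ref{lemma:coupled_joint_compare} doing the comparison work for the coupled block. The monotone-convergence claim then follows from the uniform upper bound in (b$_k$).

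\textbf{Base case $k=0$.} Here $P^{(0)}=\hat P^{(0)}=0$, and the hypotheses $R_{22}\succ0$, $\hat R_{22}\succ0$ together with $(L,\hat L)\in\mathcal{A}$ are used (as in \cite{wang2025}) to get $0\in\mathrm{Dom}\,\mathcal{G}$ and $(0,0)\in\mathrm{Dom}\,\mathcal{F}$. We have $\mathcal{G}(0)=Q-S(0)^\top R(0)^{-1}S(0)=\sum_l E_{l,(0)}^\top E_{l,(0)}\succeq0$ and likewise $\mathcal{F}(0,0)=\hat E_{(0)}^\top\hat E_{(0)}\succeq 0$. So \eqref{eq:initial_Z} is an ARE of the type \eqref{pr:are1}, with minimizing weight $R_{22}\succ0$, nonnegative state weight and a detectable output. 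Stabilizability comes from $(L,\hat L)\in\mathcal{A}$, since $L$ is a stabilizing feedback through $B_2$.
\begin{itemize}
\item Proposition~\ref{main_results:coupled_ARE} then gives a unique stabilizing $Z^{(0)}\succeq0$.
\item With $P^{(1)}$ fixed, \eqref{eq:initial_hatZ} is exactly of the form \eqref{pr:are2}. The same proposition (through the $\check E$ augmentation in its proof) yields a stabilizing $\hat Z^{(0)}$.
\end{itemize}
Being stabilizing, these solutions give (a$_0$) directly, because the closed-loop operators of the two AREs are $\mathcal{L}^{(0,1)}$ and $\hat{\mathcal{L}}^{(0,1)}$. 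Next, equations \eqref{eq:initial_Z}--\eqref{eq:initial_hatZ} are precisely \eqref{main_results:g_interrelated_equation}--\eqref{main_results:f_interrelated_equation} with $(P,\hat P)=(0,0)$. Propositions~\ref{function_gf:proposition_3} and \ref{function_gf:proposition_1}, combined with the Schur complement of $R(P+Z)$ with respect to its $(2,2)$ block, then produce the identities for $\mathcal{G}(P^{(1)})$ and $\mathcal{F}(\hat P^{(1)},P^{(1)})$ in (c$_0$). The $N_2$ part of $N^\top R^{-1}N$ is cancelled by the ARE term, leaving $-\Lambda^\top(\mathbb{R}_{22}^\sharp)^{-1}\Lambda$.

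\textbf{Induction step.} Assume the claims hold up to $k-1$. Lemma~\ref{lemma:coupled_joint_compare}(i), applied with $(P,Z,\hat P,\hat Z)=(P^{(k-1)},Z^{(k-1)},\hat P^{(k-1)},\hat Z^{(k-1)})$ and the spectral hypotheses from (b$_{k-2}$), gives $\tilde P_L\succeq P^{(k)}$ and $\hat P_L\succeq\hat P^{(k)}$. Lemma~\ref{lemma:coupled_joint_compare}(ii) then gives stability of $\mathcal{L}^*_{P^{(k)}}$ and $\hat{\mathcal{L}}^*_{(\hat P^{(k)},P^{(k)})}$. The order bound $P^{(k)}\preceq\tilde P_L$ gives
\[
R_{11}(P^{(k)})\preceq R_{11}+\sum_l D_{l,1}^\top\tilde P_LD_{l,1}\prec0,
\]
and the analogous hatted bound. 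Since $Z^{(j)}\succeq0$, also $R_{22}(P^{(k)})\succeq R_{22}\succ0$. Together these establish the domain membership in (c$_k$).

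Equation \eqref{eq:iter_Z} is an ARE whose constant term is $\mathcal{G}(P^{(k)})=-\Lambda^\top(\mathbb{R}_{22}^\sharp)^{-1}\Lambda\succeq0$ by (c$_{k-1}$), and whose control weight $R_{22}(P^{(k)})$ is positive definite. Stabilizability holds because stability of $\mathcal{L}^*_{P^{(k)}}$ (feedback $L$) supplies a stabilizing gain. Existence of the stabilizing solution $Z^{(k)}$ therefore follows from the stabilizing-solution criterion in \cite{Dragan2013book} under stabilizability and $\mathcal{G}(P^{(k)})\succeq0$, which gives (a$_k$). The $\hat Z^{(k)}$ equation \eqref{eq:iter_hatZ} is handled the same way, using the hatted identity. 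Applying Lemma~\ref{lemma:coupled_joint_compare} again at index $k$ gives (b$_k$). The Schur-complement computation above, repeated at index $k$, gives (c$_k$).

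\textbf{Convergence.} First, $Z^{(k)}\succeq0$ and $\hat Z^{(k)}\succeq0$. This holds because in the stabilizing ARE for $Z^{(k)}$ all inhomogeneous terms are $\succeq 0$ in the closed-loop form, so Theorem 2.7.5 of \cite{Dragan2013book} applies. Hence the sequences $P^{(k)}$ and $\hat P^{(k)}$ are nondecreasing and bounded above by $\tilde P_L$ and $\hat P_L$, so they converge to limits $\tilde P$ and $\hat P$.

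To identify the limits, pass to the limit in (c$_k$). Since $Z^{(k)}\to0$, we have $N_{i,(k)}\to0$, hence $\mathcal{G}(\tilde P)=0$ and $\mathcal{F}(\hat P,\tilde P)=0$. The sign conditions survive in the limit because of the strict bounds through $\tilde P_L$ and $R_{22}(\tilde P)\succeq R_{22}$. Stabilizability of the limit comes from the limit version of Lemma~\ref{lemma:coupled_joint_compare}(ii), i.e.\ the detectability construction with $\hat\varTheta$, applied at $Z=0$.

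\textbf{Main obstacle.} The delicate step is the sign information that makes the induction run. The claims $\mathcal{G}(P^{(k)})\succeq0$ and $Z^{(k)}\succeq0$ require $\mathbb{R}_{22}^\sharp(P^{(k)}+Z^{(k)})\prec0$, which must come from $P^{(k+1)}\preceq\tilde P_L$. The hatted analogue, however, mixes $Z$ and $\hat Z$ in the $\hat N$ terms, so the hatted induction cannot close on its own. The order of the iteration must be respected: $P^{(k+1)}$ is computed first and then fed into the $\hat Z^{(k)}$ equation. Checking that the comparison in Lemma~\ref{lemma:coupled_joint_compare} is legitimate with this staggered dependence is where I expect the real work to lie.
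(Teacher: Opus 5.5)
\textbf{Gap 1: existence of $Z^{(k)}$ and $\hat Z^{(k)}$ in the induction step.} Your overall architecture is the paper's: a joint induction on (a$_k$)--(c$_k$), Proposition~\ref{main_results:coupled_ARE} for the subproblems, Lemma~\ref{lemma:coupled_joint_compare} for the comparison, and monotone bounds. The induction step, however, does not go through as written. You get the stabilizing $Z^{(k)}$ (and $\hat Z^{(k)}$) from stabilizability together with $\mathcal{G}(P^{(k)})\succeq 0$ and $R_{22}(P^{(k)})\succ0$. That is not enough. Take $n=m=1$, $A=0$, $B=1$, $R=1$, $Q=0$ and no noise: the pair is stabilizable, yet the only solution of $-Z^2=0$ is $Z=0$, and its closed loop is not Hurwitz.

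Some detectability condition is indispensable, and for $k\ge1$ none is assumed. The constant term $-\Lambda_{(k-1)}^\top\mathbb{R}_{22}^{\sharp}(P^{(k)})^{-1}\Lambda_{(k-1)}$, where $\Lambda_{(k-1)}=N_{1,(k-1)}-R_{12}(P^{(k)})R_{22}(P^{(k)})^{-1}N_{2,(k-1)}$, may be very degenerate. Your induction step never uses (a$_{k-1}$), and this is exactly where it is needed.

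The paper factors $\mathcal{G}(P^{(k)})=\sum_{l=0}^r E_{l,(k)}^\top E_{l,(k)}$, with $E_{l,(k)}$ carrying $\sqrt{-\tfrac{1}{r+1}\mathbb{R}_{22}^{\sharp}(P^{(k)})^{-1}}\,\Lambda_{(k-1)}$ in its $l$-th block. It then uses the output injection $\tilde\varTheta^{(k)}_l=-\bigl(D_{l,1}-D_{l,2}R_{22}(P^{(k)})^{-1}R_{21}(P^{(k)})\bigr)\sqrt{-(r+1)\mathbb{R}_{22}^{\sharp}(P^{(k)})^{-1}}$, with $B_1,B_2$ in place of $D_{l,1},D_{l,2}$ for $l=0$. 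By Proposition~\ref{function_gf:proposition_1} and block inversion of $R(P^{(k)})$,
\[
A_{(k)}+\tilde\varTheta^{(k)}E_{0,(k)}=A_{(k-1)}+B_2T_{(k)},\qquad C_{l,(k)}+\tilde\varTheta^{(k)}E_{l,(k)}=C_{l,(k-1)}+D_{l,2}T_{(k)}.
\]
So the injected system is precisely the one governed by $\mathcal{L}^{(k-1,k)}$, which is stable by (a$_{k-1}$). This gives the detectability that Proposition~\ref{main_results:coupled_ARE} needs. The hatted equation is handled the same way with $\hat E_{(k)}$ and $\hat\varTheta^{(k)}$, which turn $\hat A_{(k)}$ into $\hat A_{(k-1)}+\hat B_2\hat T_{(k)}$.

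\textbf{Gap 2: identification of the limit.} Write $(P^*,\hat P^*)$ for the limits. Applying Lemma~\ref{lemma:coupled_joint_compare}(ii) at $Z=\hat Z=0$ only yields stability of $\mathcal{L}^*_{P^*}$ and $\hat{\mathcal{L}}^*_{(\hat P^*,P^*)}$. The drift of the first is $A+B_1K_1(P^*)+B_2(K_2(0)+L)$, and similarly for the hatted one: player 2 stays frozen at the fixed $(L,\hat L)\in\mathcal{A}$. Being a stabilizing solution of \eqref{mflqsdg:gtare} concerns the closed loop under $K_2(P^*)$ and $\hat K_2(\hat P^*,P^*)$. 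Choosing $L=\check K_2(P^*)$ would presuppose that this $L$ lies in $\mathcal{A}$, which is the very point at stake.

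The paper argues in the opposite direction. For an arbitrary stabilizing solution $(P^*,\check P)$, it sets $L=\check K_2(P^*)$ and $\check L=\check K_2(\check P,P^*)$, so that $(L,\check L)\in\mathcal{A}$ with reduced stabilizing solution $(P^*,\check P)$. Then (b$_k$) gives $\hat P^*\preceq\check P$. The expansion of Proposition~\ref{function_gf:proposition_3} around $\check P$, combined with Lemma~\ref{main_results:stochastically_detectable_lemma}, gives $\hat P^*\succeq\check P$, which forces equality. The first component is inherited from Theorem 3.9 of \cite{wang2025}. Your proposal needs this identification step, or an equivalent one, to conclude.
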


\begin{proof}
\textbf{Step 1. Base case ($k=0$).}
Let $P^{(0)} = 0$ and $\hat{P}^{(0)} = 0$. We first establish the well-posedness of the stabilizing solutions $(Z^{(0)},\hat{Z}^{(0)})$ to the initial coupled ARE \eqref{eq:initial_Z} and \eqref{eq:initial_hatZ}.

Since $\mathcal{A} \neq \emptyset$, fix an arbitrary $(L,\hat{L}) \in \mathcal{A}$ and set $(L^{(0)},\hat{L}^{(0)} = (L,\hat{L})$. By definition of the coupled stabilizability set $\mathcal{A}$, using Proposition 1 in \cite{Li2025},  the system 
\[
[A_{(0)}, \bar{A}_{(0)}, \{C_{l,(0)}\}_{1 \leq l \leq r}, \{\bar{C}_{l,(0)}\}_{1 \leq l \leq r}; B_{1}, \bar{B}_{1}, \{D_{l,(0)}\}_{1 \leq l \leq r}, \{\bar{D}_{l,(0)}\}_{1 \leq l \leq r}]
\] 
is \textit{MF-$L^2$-stabilizable}.

Combined with the assumptions $R_{22} \succ 0$, $\hat{R}_{22} \succ 0$,
\[
\sum_{l=0}^{r}E^{\top}_{l,(0)}E_{l,(0)} = Q - S^\top(0) R(0)^{-1}S(0) \succeq 0,
\hat{E}^{\top}_{(0)}\hat{E}_{(0)}\\ = \hat{Q} - \hat{S}^\top(0) \hat{R}(0)^{-1}\hat{S}(0) \succeq 0,
\]
and the detectability of $\left[ E_0,\{E_l\}_{ 1 \leq l \leq r}; A_{(0)}, \{C_{1,(0)}\}_{ 1 \leq l \leq r} \right]$ and $[\hat{E}_{(0)}; \hat{A}_{(0)}]$, Proposition \ref{main_results:coupled_ARE} guarantees the existence and uniqueness of stabilizing solutions $(Z^{(0)},\hat{Z}^{(0)})$ satisfying $R_{22}(Z^{(0)}) \succ 0$ and $\hat{R}_{22}(Z^{(0)}) \succ 0$. Consequently,
$\operatorname{Spec}\, \mathcal{L}^{(0,1)} \subset \mathbb{C}^{-},\operatorname{Spec}\, \hat{\mathcal{L}}^{(0,1)} \subset \mathbb{C}^{-}$.

Let $(\tilde{P}_{L},\hat{P}_{L})$ stand for the stabilizing solution of the coupled ARE \eqref{mflqsdg:coupledARE_l} corresponding to $(L,\hat{L})$. By construction, the spectral inclusions $\mathcal{L}^*_{P^{(0)}} \subset \mathbb{C}^{-}$ and $\hat{\mathcal{L}}^*_{(\hat{P}^{(0)}, P^{(0)})} \subset \mathbb{C}^{-}$ are satisfied, whence Lemma \ref{lemma:coupled_joint_compare} implies
\[
\tilde{P}_L \succeq P^{(0)} + Z^{(0)}, \quad \hat{P}_{L} \succeq \hat{P}^{(0)} + \hat{Z}^{(0)},
\]
and
\[
\operatorname{Spec}\, \mathcal{L}^*_{P^{(1)}} \subset \mathbb{C}^{-}, \quad \operatorname{Spec}\, \hat{\mathcal{L}}^*_{(\hat{P}^{(1)}, P^{(1)})} \subset \mathbb{C}^{-}.
\]

From $R_{22}(\tilde{P}_{L}) \succeq R_{22} \succ 0$, $\hat{R}_{22}(\tilde{P}_{L}) \succeq \hat{R}_{22} \succ 0$, $R_{11} \preceq R_{11}(\tilde{P}_{L}) \prec 0$, and $\hat{R}_{11} \preceq \hat{R}_{11}(\tilde{P}_{L}) \prec 0$, we have $P^{(0)}, \tilde{P}_{L} \in \mathrm{Dom}\,\mathcal{G}$ and $(\hat{P}^{(0)}, P^{(0)}), (\hat{P}_{L}, \tilde{P}_{L}) \in \mathrm{Dom}\,\mathcal{F}$. Moreover, $R_{22}(P^{(1)}) \succeq R_{22} \succ 0$, $\hat{R}_{22}(P^{(1)}) \succeq \hat{R}_{22} \succ 0$, $R_{11}(P^{(1)}) \prec R_{11}(\tilde{P}_{L}) \prec 0$, and $\hat{R}_{11}(P^{(1)}) \prec \hat{R}_{11}(\tilde{P}_{L}) \prec 0$ ensure $P^{(0)} + Z^{(0)} \in \mathrm{Dom}\,\mathcal{G}$ and $(\hat{P}^{(0)} + \hat{Z}^{(0)}, P^{(0)} + Z^{(0)}) \in \mathrm{Dom}\,\mathcal{F}$.

Substituting \eqref{eq:initial_Z} and \eqref{eq:initial_hatZ} into the expressions for $\mathcal{G}(P^{(0)}+Z^{(0)})$ and $\mathcal{F}(\hat{P}^{(0)}+\hat{Z}^{(0)}, P^{(0)}+Z^{(0)})$ and applying Proposition \ref{function_gf:proposition_3}, we obtain the desired formulas in property $c_0$. This completes the proof of $a_0$--$c_0$.

\textbf{Step 2. Inductive step.}
Assume that for some $h \geq 1$, $Z^{(h-1)}$ and $\hat{Z}^{(h-1)}$ are well-defined and properties $a_{h-1}$--$c_{h-1}$ hold. We prove that $Z^{(h)}$ and $\hat{Z}^{(h)}$ are well-defined and $a_h$--$c_h$ hold.

By the inductive hypothesis $c_{h-1}$, setting $E_{l,(h)}\in \mathbb{R}^{[m_1(r+1)]\times n},\hat{E}_{(h)} \in \mathbb{R}^{(n+m_1)\times n}$ for $0 \leq l \leq r$, and
\[
    E_{l,(h)}=\begin{bmatrix} \mathbb{O}_{m_1 \times n}^{\times (l)} \\ \sqrt{-\frac{1}{r+1}\mathbb{R}_{22}^\sharp\left( P^{(h)} \right)^{-1} }\left(N_{1,(h-1)}-R_{12}(P^{(h)})R_{22}(P^{(h)})^{-1}N_{2,(h-1)}\right) \\ \mathbb{O}_{m_1 \times n}^{\times (r-l)}\end{bmatrix},
\]
\[
\hat{E}_{(h)}=\begin{bmatrix}
\sqrt{\sum_{l=1}^r \hat{C}_{l,(h)}^\top P^{(h+1)}\hat{C}_{l,(h)}}\\
    \sqrt{-\hat{\mathbb R}_{22}^{\sharp}(P^{(h)})^{-1}}
    \bigl(\hat N_{1,(h-1)}-\hat R_{12}(P^{(h)}\hat R_{22}^{-1}(P^{(h)})\hat N_{2,(h-1)}\bigr)
\end{bmatrix},
\]
we have
\[
\mathcal{G}(P^{(h)}) = \sum_{l=0}^{r}E^{\top}_{l,(h)}E_{l,(h)} \succeq 0, \quad
\sum_{l=1}^r \hat{C}_{l,(h)}^\top P^{(h+1)}\hat{C}_{l,(h)}+\mathcal{F}(\hat{P}^{(h)}, P^{(h)}) = \hat{E}_{(h)}^\top \hat{E}_{(h)} \succeq 0.
\]

For $k=h$, $Z^{(h)}$ and $\hat{Z}^{(h)}$ satisfy the coupled AREs \eqref{eq:iter_Z} and \eqref{eq:iter_hatZ}. By Proposition \ref{main_results:coupled_ARE}, their well-posedness is equivalent to verifying the \textit{MF-$L^2$-stabilizable} and detectability of the associated systems.

Define $L^{(h)} = L - \check{K}_2(P^{(h)})$ and $\hat{L}^{(h)} = \hat{L} - \check{K}_2(\hat{P}^{(h)}, P^{(h)})$. The systems 
\[
[A_{(h)}+B_2L^{(h)}, \{C_{l,(h)}+D_{l,2}L^{(h)}\}_{1 \leq l \leq r}]
\]
and
$
[\hat{A}_{(h)}+\hat{B}_2\hat{L}^{(h)}]
$
are associated with $\mathcal{L}^*_{P^{(h)}}$ and $\hat{\mathcal{L}}^*_{(\hat{P}^{(h)}, P^{(h)})}$, respectively. By inductive hypothesis $b_{h-1}$, both operators have spectra contained in $\mathbb{C}^{-}$, hence the required \textit{MF-$L^2$-stabilizable} for system 
\[
[A_{(h)}, \bar{A}_{(h)}, \{C_{l,(h)}\}_{1 \leq l \leq r}, \{\bar{C}_{l,(h)}\}_{1 \leq l \leq r}; B_{1}, \bar{B}_{1}, \{D_{l,(h)}\}_{1 \leq l \leq r}, \{\bar{D}_{l,(h)}\}_{1 \leq l \leq r}]
\]
follows.

For detectability, define auxiliary matrices $\tilde{\varTheta}^{(h)}$ and $\hat{\varTheta}^{(h)}$ as 
\[
\tilde{\varTheta}^{(h)} =\begin{bmatrix} \tilde{\varTheta}_{0}^{(h)} & \dotsb  & \tilde{\varTheta}_{r}^{(h)}\end{bmatrix}\in \mathbb{R}^{n\times [m_1(r+1)]},\hat{\varTheta}^{(h)} =\begin{bmatrix} \mathbb{O}_{n \times n} &  \hat{\varTheta}^{(h)}\end{bmatrix}\in \mathbb{R}^{n\times (n+m_1)},
\]
with
\[
\tilde{\varTheta}_{0}^{(h)}=-\left(B_{1}-B_{2}R_{22}(P^{(h)} )^{-1}R_{21}(P^{(h)} )\right)\sqrt{-(r+1)\mathbb{R}_{22}^\sharp\left( P^{(h)} \right)^{-1} },
\]
\[
\tilde{\varTheta}_{l}^{(h)}=  -\left(D_{l1}-D_{l2}R_{22}(P^{(h)})^{-1}R_{21}(P^{(h)} )\right)\sqrt{-(r+1)\mathbb{R}_{22}^\sharp\left( P^{(h)}  \right)^{-1} },\quad 1 \leq l \leq r,
\]
\[
\hat{\varTheta}^{(h)}=-\left(\hat{B}_{1}-\hat{B}_{2}\hat{R}_{22}(P^{(h)} )^{-1}\hat{R}_{21}(P^{(h)} )\right)\sqrt{-\hat{\mathbb{R}}_{22}^\sharp\left( P^{(h)} \right)^{-1} }.
\]
The systems
\[
[A_{(h)}+\tilde{\varTheta}^{(h)}E_{0,(h)}, \{C_{l,(h)}+\tilde{\varTheta}^{(h)}E_{l,(h)}\}_{1 \leq l \leq r}]
\]
and
\[
[\hat{A}_{(h)}+\hat{\varTheta}^{(h)}\hat{E}_{(h)}]
\]
are associated with $\mathcal{L}^{(h-1,h)}$ and $\hat{\mathcal{L}}^{(h-1,h)}$, respectively. By inductive hypothesis $a_{h-1}$, both systems are stable, which guarantees the detectability of
$\left[ [E_{0,(h)},\{E_{l,(h)}\}_{ 1 \leq l \leq r}; A_{(h)}, \{C_{1,(h)}\}_{ 1 \leq l \leq r} \right]$ and $[\hat{E}_{(h)}; \hat{A}_{(h)}]$.

Invoking Proposition \ref{main_results:coupled_ARE}, AREs \eqref{eq:iter_Z} and \eqref{eq:iter_hatZ} admit unique stabilizing solutions $Z^{(h)}$ and $\hat{Z}^{(h)}$ such that $R_{22}(P^{(h+1)}) \succ 0$ and $\hat{R}_{22}(P^{(h+1)}) \succ 0$, and
$
\operatorname{Spec}\, \mathcal{L}^{(h,h+1)} \subset \mathbb{C}^{-},\operatorname{Spec}\, \hat{\mathcal{L}}^{(h,h+1)} \subset \mathbb{C}^{-},
$
which establishes property $a_h$.

Applying Lemma \ref{lemma:coupled_joint_compare} to the stable operators $\mathcal{L}^*_{P^{(h)}}$ and $\hat{\mathcal{L}}^*_{(\hat{P}^{(h)}, P^{(h)})}$ in $a_{h-1}$, we obtain
\[
\tilde{P}_L \succeq P^{(h)} + Z^{(h)}, \quad \hat{P}_{L} \succeq \hat{P}^{(h)} + \hat{Z}^{(h)},
\]
and
\[
\operatorname{Spec}\, \mathcal{L}^*_{P^{(h+1)}} \subset \mathbb{C}^{-}, \quad \operatorname{Spec}\, \hat{\mathcal{L}}^*_{(\hat{P}^{(h+1)}, P^{(h+1)})} \subset \mathbb{C}^{-},
\]
which establishes property $b_h$.

From $R_{22}(P^{(h+1)}) \succ 0$, $\hat{R}_{22}(P^{(h+1)}) \succ 0$, $R_{11}(P^{(h+1)}) \preceq R_{11}(\tilde{P}_L) \prec 0$, and $\hat{R}_{11}(P^{(h+1)}) \preceq \hat{R}_{11}(\tilde{P}_L) \prec 0$, we conclude that $P^{(h+1)} \in \mathrm{Dom}\,\mathcal{G}$ and $(\hat{P}^{(h+1)}, P^{(h+1)}) \in \mathrm{Dom}\,\mathcal{F}$. Substituting the iterative AREs into the expressions for $\mathcal{G}(P^{(h+1)})$ and $\mathcal{F}(\hat{P}^{(h+1)}, P^{(h+1)})$ and applying Proposition \ref{function_gf:proposition_3} yields the desired formulas in property $c_h$.

By mathematical induction, properties $a_k$--$c_k$ hold for all $k \in \mathbb{N}$.

\textbf{Step 3. Convergence analysis.}
Accordingly, $\{P^{(k)}\}$ and $\{\hat{P}^{(k)}\}$ form monotonically non-decreasing sequences of symmetric matrices that are uniformly bounded from above by $\tilde{P}_L$ and $\hat{P}_L$, respectively. Applying the monotone convergence theorem for symmetric matrix sequences, the limits
\[
P^* = \lim_{k \to \infty} P^{(k)}, \quad \hat{P}^* = \lim_{k \to \infty} \hat{P}^{(k)}
\]
are well-defined and satisfy $P^* \preceq \tilde{P}_L$, $\hat{P}^* \preceq \hat{P}_L$ for all $(L,\hat{L}) \in \mathcal{A}$. In addition, the following four limit identities hold:
\[
\lim_{h\rightarrow\infty}\sum_{s=h+1}^{\infty}Z^{(s)}=0;\quad\lim_{h\rightarrow\infty}\sum_{s=h+1}^{\infty}\hat{Z}^{(s)}=0;\quad\lim_{k\rightarrow\infty}Z^{(k)}=0;\quad\lim_{k\rightarrow\infty}\hat{Z}^{(k)}=0.
\]

As established in Theorem 3.9 of \cite{wang2025}, $P^*$ constitutes the stabilizing solution to the first constituent equation of the coupled GTARE \eqref{mflqsdg:gtare}. It remains to verify that the pair $(P^*,\hat{P}^*)$ serves as a stabilizing solution for the full coupled GTARE \eqref{mflqsdg:gtare}.

Owing to continuity and the recursive construction of the iterative scheme, the operator equality below holds for every integer $h\geq0$:
\begin{equation*}
    \begin{aligned}
    \mathcal{F}(\hat{P}^*,P^*)
    =&\sum_{s=h+1}^{\infty}\hat{Z}^{(s)}\hat{A}_{(h)} + \sum_{s=h+1}^{\infty}\hat{A}_{(h)}^{\top} \hat{Z}^{(s)} + \sum_{s=h+1}^{\infty}\sum_{l=1}^{r}\hat{C}_{l,(h)}^{\top} Z^{(s)} \hat{C}_{l,(h)}\\
    &- \left( \sum_{s=h+1}^{\infty}\hat{B}_2^{\top} \hat{Z}^{(s)} + \sum_{s=h+1}^{\infty}\sum_{l=1}^{r}\hat{D}_{l,2}^{\top} Z^{(s)}\hat{C}_{l,(h)} \right)^{\top} \left( \hat{R}_{22} + \sum_{l=1}^{r}\hat{D}_{l,2}^{\top} P^* \hat{D}_{l,2} \right)^{-1} \\
    &\quad \times\left( \sum_{s=h+1}^{\infty}\hat{B}_2^{\top} \hat{Z}^{(s)} + \sum_{s=h+1}^{\infty}\sum_{l=1}^{r}\hat{D}_{l,2}^{\top} Z^{(s)}\hat{C}_{l,(h)} \right)+\mathcal{F}\left(\sum_{s=0}^{h}\hat{Z}^{(s)},\sum_{s=0}^{h}Z^{(s)}\right).
    \end{aligned}
\end{equation*}
Combined with the vanishing tail-series limits
\[
\lim_{h\rightarrow\infty}\sum_{s=h+1}^{\infty}Z^{(s)}=0,\qquad \lim_{h\rightarrow\infty}\sum_{s=h+1}^{\infty}\hat{Z}^{(s)}=0,
\]
as well as the relation
\begin{equation*}
    \begin{aligned}
\lim_{h\rightarrow\infty}\mathcal{F}\left(\sum_{s=0}^{h}\hat{Z}^{(s)},\sum_{s=0}^{h}Z^{(s)}\right)
    =&\lim_{h\rightarrow\infty}-\left(\hat{N}_{1,(h)}-\hat{R}_{12}(P^{(h+1)})\hat{R}_{22}(P^{(h+1)})^{-1}\hat{N}_{2,(h)}\right)^{\top}\hat{R}^{\sharp}_{22}(P^{(h+1)})^{-1}\\
    &\times \left(\hat{N}_{1,(h)}-\hat{R}_{12}(P^{(h+1)})\hat{R}_{22}(P^{(h+1)})^{-1}\hat{N}_{2,(h)}\right)=0,
    \end{aligned}
\end{equation*}
we therefore conclude $\mathcal{F}(\hat{P}^*,P^*)=0$, which confirms that $(P^*,\hat{P}^*)$ is the solution to the full coupled GTARE \eqref{mflqsdg:gtare}.

Suppose $(P^*,\check{P})$ denotes an arbitrary stabilizing solution of the coupled GTARE \eqref{mflqsdg:gtare}. Define $L\triangleq \check{K}_2(P^*)$ and $\check{L}\triangleq \check{K}_2(\check{P},P^*)$, where the operator mappings $\check{K}_2(\cdot),\check{K}_2(\cdot,\cdot)$ are specified in \eqref{iterative:matrices_evolve0}. By Proposition \ref{function_gf:proposition_2}, the coupled GTARE \eqref{mflqsdg:gtare} can be recast into the coupled ARE \eqref{mflqsdg:coupledARE_l} parameterized by $(L,\check{L})$, with $(P^*,\check{P})$ being its corresponding stabilizing solution. From detectability reasoning parallel to Step 2 together with Lemma \ref{main_results:stochastically_detectable_lemma}, we obtain $(L,\check{L})\in\mathcal{A}$.

Invoking Proposition \ref{function_gf:proposition_3}, we split $\hat{P}^* = (\hat{P}^* - \check{P}) + \check{P}$ and expand $\mathcal{F}(\hat{P}^*,P^*)$ into the following form:
\begin{align*}
\mathcal{F}(\hat{P}^*,P^*)
=&(\hat{P}^*-\check{P})\left(\hat{A} + \hat{B}_1\hat{K}_1(\check{P},P^*)+\hat{B}_2\hat{K}_2(\check{P},P^*)\right)+\\
&\left(\hat{A} + \hat{B}_1\hat{K}_1(\check{P},P^*)+\hat{B}_2\hat{K}_2(\check{P},P^*)\right)^{\top}(\hat{P}^*-\check{P})\\
&+\sum_{l=1}^{r}\left(\hat{C}_{l} + \hat{D}_{l,1}\hat{K}_1(\check{P},P^*)+ \hat{D}_{l,2}\hat{K}_1(\check{P},P^*)\right)^{\top}P^*\left(\hat{C}_{l} + \hat{D}_{l,1}\hat{K}_1(\check{P},P^*)+ \hat{D}_{l,2}\hat{K}_1(\check{P},P^*)\right)\\
&-\hat{N}_2(\check{P},P^*,\hat{P}^*-\check{P},0)^{\top} \hat{R}_{22}(P^*)^{-1} \hat{N}_2(\check{P},P^*,\hat{P}^*-\check{P},0)-M^{\top}\mathbb{\hat{R}}_{22}^{\sharp}(P^*)^{-1}M+\mathcal{F}(\check{P},P^*),
\end{align*}
in which
\[
M =  \hat{N}_1(\check{P},P^*,\hat{P}^*-\check{P},0)-\hat{R}_{12}(P^*)\hat{R}_{22}(P^*)^{-1}\hat{N}_2(\check{P},P^*,\hat{P}^*-\check{P},0),
\]
and $\hat{N}_1,\hat{N}_2$ are defined in \eqref{main_results:operators_k&n}. Clearly, the quadratic term satisfies $-M^{\top}\mathbb{\hat{R}}_{22}^{\sharp}(P^*)^{-1}M \succeq 0$.

By virtue of Lemma \ref{main_results:stochastically_detectable_lemma}, the matrix semidefinite ordering $\hat{P}^* \succeq \check{P}$ holds. Recalling the relation $\hat{P}_{\check{L}} = \check{P}$, we obtain the sandwich inequality $\check{P} \preceq \hat{P}^* \preceq \hat{P}_{\check{L}} = \check{P}$. This immediately forces $\hat{P}^*= \check{P}$; in other words, the constructed iterative sequence converges to the unique stabilizing solution of the stochastic coupled GTARE \eqref{mflqsdg:gtare}.

This completes the proof of the theorem.
\end{proof}

\section{Numerical Example}
\label{sec:example}

To verify the effectiveness of the proposed computational method, we present a numerical example as follows:

\begin{align*}
A &= \begin{bmatrix}
        0.408487 & 0.408316 & -0.613031 \\
        -1.051924 & -0.418930 & 0.752468 \\
        -1.691520 & -0.563314 & -0.638052
\end{bmatrix} &
Q &= \begin{bmatrix}
        2.078924 & 0.103475 & 0.105020 \\
        0.103475 & 2.199993 & 0.235006 \\
        0.105020 & 0.235006 & 2.145319
\end{bmatrix}
\end{align*}

\begin{align*}
\bar{A} &= \begin{bmatrix}
        -0.795393 & -0.164484 & 0.133485 \\
        0.724888 & -1.290015 & 0.245408 \\
        -0.413610 & 0.070707 & -0.307858
\end{bmatrix} &
\bar{Q} &= \begin{bmatrix}
        1.941410 & 0.113238 & 0.030321 \\
        0.113238 & 2.279649 & 0.135375 \\
        0.030321 & 0.135375 & 2.079688
\end{bmatrix}
\end{align*}

\begin{align*}
B_1 &= \begin{bmatrix}
        2.967512 & -0.377970 & -0.185514 \\
        -0.018924 & 2.887587 & -0.258110 \\
        -0.388666 & -0.106649 & 2.665765
\end{bmatrix} &
B_2 &= \begin{bmatrix}
        7.307403 & 0.264639 & 0.380914 \\
        0.443168 & 7.430662 & 0.098221 \\
        0.334017 & 0.390885 & 7.005462
\end{bmatrix}
\end{align*}

\begin{align*}
\bar{B}_{1} &= \begin{bmatrix}
        2.587948 & -0.300921 & -0.422051 \\
        -0.051107 & 2.554979 & -0.046939 \\
        -0.429258 & -0.399601 & 2.506959
\end{bmatrix} &
\bar{B}_{2} &= \begin{bmatrix}
        7.484557 & 0.302974 & 0.003160 \\
        0.044090 & 7.149427 & 0.208385 \\
        0.037259 & 0.058320 & 7.185435
\end{bmatrix}
\end{align*}

\begin{align*}
C_1 &= \begin{bmatrix}
        1.104243 & 2.717840 & -0.633231 \\
        -2.267561 & 1.476320 & -0.288744 \\
        -0.259802 & -2.449311 & -1.479046
\end{bmatrix} &
C_2 &= \begin{bmatrix}
        1.607085 & 0.824551 & -0.919251 \\
        -0.932793 & -0.057529 & -0.580398 \\
        -0.715109 & 1.091784 & -1.252092
\end{bmatrix}
\end{align*}

\begin{align*}
\bar{C}_{1} &= \begin{bmatrix}
        -1.080545 & 0.847506 & 1.133466 \\
        -0.995120 & -1.946739 & 2.311947 \\
        0.086116 & -0.495132 & 0.741186
\end{bmatrix} &
\bar{C}_{2} &= \begin{bmatrix}
        -0.062782 & -0.589701 & 1.014898 \\
        0.167362 & -1.128953 & 0.224506 \\
        0.722742 & -0.222852 & -0.314263
\end{bmatrix}
\end{align*}

\begin{align*}
D_{11} &= \begin{bmatrix}
        0.005556 & 0.009032 & 0.000095 \\
        0.005498 & 0.002891 & 0.009092 \\
        0.002450 & 0.001407 & 0.001658
\end{bmatrix} &
D_{12} &= \begin{bmatrix}
        0.000544 & 0.000258 & 0.007273 \\
        0.004977 & 0.004754 & 0.000783 \\
        0.009926 & 0.005598 & 0.000617
\end{bmatrix}
\end{align*}

\begin{align*}
D_{21} &= \begin{bmatrix}
        0.005964 & 0.002042 & 0.000746 \\
        0.004253 & 0.003787 & 0.003855 \\
        0.009422 & 0.006086 & 0.001819
\end{bmatrix} &
D_{22} &= \begin{bmatrix}
        0.004027 & 0.007070 & 0.006877 \\
        0.004564 & 0.002345 & 0.005071 \\
        0.007355 & 0.007320 & 0.002065
\end{bmatrix}
\end{align*}

\begin{align*}
\bar{D}_{11} &= \begin{bmatrix}
        0.003863 & 0.002066 & 0.004529 \\
        0.001781 & 0.005175 & 0.002537 \\
        0.009128 & 0.006056 & 0.007500
\end{bmatrix} &
\bar{D}_{12} &= \begin{bmatrix}
        0.000101 & 0.008952 & 0.003808 \\
        0.005151 & 0.009692 & 0.009632 \\
        0.000458 & 0.006072 & 0.004141
\end{bmatrix}
\end{align*}

\begin{align*}
\bar{D}_{21} &= \begin{bmatrix}
        0.002059 & 0.003393 & 0.005214 \\
        0.005400 & 0.005027 & 0.001769 \\
        0.005689 & 0.001261 & 0.009123
\end{bmatrix} &
\bar{D}_{22} &= \begin{bmatrix}
        0.001866 & 0.003273 & 0.002908 \\
        0.008619 & 0.008379 & 0.004893 \\
        0.007199 & 0.004095 & 0.006787
\end{bmatrix}
\end{align*}

\begin{align*}
R_{11} &= \begin{bmatrix}
        -5.535711 & -1.243871 & -1.060904 \\
        -1.243871 & -5.421049 & -0.883191 \\
        -1.060904 & -0.883191 & -5.179607
\end{bmatrix} &
R_{12} &= \begin{bmatrix}
        0.870715 & 0.214393 & 0.212937 \\
        0.153759 & 0.467172 & 0.877726 \\
        0.701728 & 0.509245 & 0.203460
\end{bmatrix}
\end{align*}

\begin{align*}
R_{21} &= \begin{bmatrix}
        0.870715 & 0.153759 & 0.701728 \\
        0.214393 & 0.467172 & 0.509245 \\
        0.212937 & 0.877726 & 0.203460
\end{bmatrix} &
R_{22} &= \begin{bmatrix}
        5.012442 & 0.080085 & 0.103574 \\
        0.080085 & 5.626602 & 0.572935 \\
        0.103574 & 0.572935 & 6.171741
\end{bmatrix}
\end{align*}

\begin{align*}
\bar{R}_{11} &= \begin{bmatrix}
        -4.125856 & -0.082192 & -0.097008 \\
        -0.082192 & -4.318547 & -0.118512 \\
        -0.097008 & -0.118512 & -4.252670
\end{bmatrix} &
\bar{R}_{12} &= \begin{bmatrix}
        0.148495 & 0.772843 & 0.581107 \\
        0.826663 & 0.667771 & 0.810371 \\
        0.233090 & 0.751656 & 0.122593
\end{bmatrix}
\end{align*}

\begin{align*}
\bar{R}_{21} &= \begin{bmatrix}
        0.148495 & 0.826663 & 0.233090 \\
        0.772843 & 0.667771 & 0.751656 \\
        0.581107 & 0.810371 & 0.122593
\end{bmatrix} &
\bar{R}_{22} &= \begin{bmatrix}
        5.456714 & 0.284852 & 0.850576 \\
        0.284852 & 5.225094 & 0.499638 \\
        0.850576 & 0.499638 & 7.276496
\end{bmatrix}
\end{align*}

\begin{align*}
S_1 &= \begin{bmatrix}
        0.053291 & 0.264513 & 0.938280 \\
        0.098096 & 0.241187 & 0.530500 \\
        0.480865 & 0.854573 & 0.153926
\end{bmatrix} &
S_2 &= \begin{bmatrix}
        0.188821 & 0.738823 & 0.947789 \\
        0.546263 & 0.608000 & 0.326058 \\
        0.592675 & 0.909203 & 0.406563
\end{bmatrix}
\end{align*}

\begin{align*}
\bar{S}_{1} &= \begin{bmatrix}
        0.921735 & 0.562137 & 0.177509 \\
        0.132059 & 0.560981 & 0.962898 \\
        0.996229 & 0.026843 & 0.640519
\end{bmatrix} &
\bar{S}_{2} &= \begin{bmatrix}
        0.372324 & 0.311355 & 0.552001 \\
        0.153874 & 0.828466 & 0.277204 \\
        0.658175 & 0.241821 & 0.146745
\end{bmatrix}
\end{align*}

After 3 external iterations, the iterative sequence converges to the stabilizing solution of the problem within the specified error tolerance. The norm of the equation residual is $1.96213 \times 10^{-9}$ and $1.81781 \times 10^{-8}$. The specific values of the stabilizing solution to the stochastic coupled GTARE \eqref{mflqsdg:gtare} (retained to five decimal places) are presented as follows:
\[
P=\begin{bmatrix}
1.55955 & -0.13514 & -0.30351 \\
-0.13514 & 1.68174 & -0.06873 \\
-0.30351 & -0.06873 & 0.89183
\end{bmatrix},\quad
\hat P=\begin{bmatrix}
1.19285 & 0.02534 & -0.47923 \\
0.02534 & 1.35410 & 0.07357 \\
-0.47923 & 0.07357 & 0.96367
\end{bmatrix}.
\]


\clearpage
\printbibliography

\end{document}